\documentclass[11pt]{amsart}

\usepackage{amsmath,amssymb,amsthm,mathrsfs,xcolor}
\usepackage[margin=1.1in]{geometry}
\usepackage{hyperref}
\usepackage{mathtools}
\mathtoolsset{showonlyrefs}
\usepackage{zref-clever}
\newcommand{\ii}{\mathrm{i}}
\newcommand{\R}{\mathbb{R}}
\newcommand{\C}{\mathbb{C}}
\newcommand{\la}{\lambda}
\newcommand{\wh}{\widehat}

\newcommand{\Lone}{L^1}

\newcommand{\dx}{\,dx}

\newcommand{\ds}{\,ds}

\newcommand{\deta}{\,d\eta}
\newcommand{\dsig}{\,d\sigma}

\newcommand{\Linfty}{L^\infty}
\newcommand{\Ltwo}{L^2}
\newcommand{\Hone}{H^1}
\newcommand{\Htwo}{H^2}
\usepackage{comment}
\numberwithin{equation}{section}

\theoremstyle{plain}
\newtheorem{theorem}{Theorem}[section]
\newtheorem{lemma}[theorem]{Lemma}
\newtheorem{proposition}[theorem]{Proposition}

\theoremstyle{definition}

\usepackage{scalerel,stackengine}
\stackMath
\newcommand\reallywidehat[1]{%
\savestack{\tmpbox}{\stretchto{%
  \scaleto{%
    \scalerel*[\widthof{\ensuremath{#1}}]{\kern-.6pt\bigwedge\kern-.6pt}%
    {\rule[-\textheight/2]{1ex}{\textheight}}
  }{\textheight}%
}{0.5ex}}%
\stackon[1pt]{#1}{\tmpbox}%
}
\begin{document}

\title[Final-state problem for 1D cubic NLS]{On the final-state problem for the 1D cubic NLS}

\author{Gong Chen}

\author{Yongyu Qiang}

\address{School of Mathematics, Georgia Institute of Technology. 686 Cherry Street,
Atlanta, GA }
\email{gc@math.gatech.edu}
\email{yqiang7@gatech.edu}
\date{\today}

\thanks{
G.C. was partially supported by NSF Grants DMS-2350301 and CAREER-DMS-2540992, by the Simons Foundation MP-TSM-00002258, and by the Stefan Bergman Fellowship. We also thank the support from the REU program in the School of Mathematics, supported by the Georgia Tech College of Sciences and NSF Grant DMS-2244427. Finally, we thank Mahnav Petersen, who participated in the 2024 REU program, for helpful earlier discussions.
}
\begin{abstract}
We consider the one-dimensional cubic nonlinear Schr\"odinger equation
$$
\ii\partial_tu+\frac12\partial_{xx}u=\la|u|^2u,\,\lambda=\pm 1
$$
and solve the final-state (modified wave operator) problem for small asymptotic  data. More precisely, given a small $W(\xi)$, we construct  a solution $u$ such that 
\begin{equation*}
    u\rightarrow (2\pi)^{-1/2}(\ii t)^{-1/2}e^{\ii x^2/(2t)}\,
W\!\Big(\frac{x}{t}\Big)\exp(-\ii\la|W(x/t)|^2\log t).
\end{equation*}
Crucially, we design  a contraction map, so that we can run the analysis in the spirit of  Kato--Pusateri \cite{KP} for $w$ with a forcing term depending {\it only} on the final data $W$. This scheme is easy to adapt to solving final state problems with a complete theory for the forward problems.

\end{abstract}
\maketitle

\section{Introduction}\label{sec:intro}

\subsection{Background}
We study the cubic NLS on $\R$,
\begin{equation}\label{eq:NLS}
\ii\partial_tu+\frac12\partial_{xx}u=\la|u|^2u,\,u(0)=u_0,\qquad (t,x)\in\R\times\R,\qquad \la=\pm 1.
\end{equation}
First, recall that sufficiently regular solutions of \eqref{eq:NLS} conserve the $L^{2}$-norm
\[M(u) := \int\left|u\right|^{2}\,dx \]
and the total energy (Hamiltonian):
\begin{align}\label{Ham}
H\left(u\right) := \int\frac{1}{2}\left|\partial_xu\right|^{2}
  \pm\frac{1}{4}\left|u\right|^{4} \, dx.
\end{align}
The Cauchy problem for \eqref{eq:NLS} 
is globally well-posed in $L^2$ and $H^1$, see for example Cazenave-Weissler \cite{CW}.

We are interested in global-in-time bounds and asymptotics
for \eqref{eq:NLS} as $|t|\rightarrow \infty$.
The main feature of the cubic nonlinearity is its criticality with respect to scattering:
linear solutions of the Schr\"odinger equation decay at best like $|t|^{-1/2}$ in $L^\infty_x$, 
so that, when evaluating the nonlinearity on linear solutions, one see that $|u|^{2}u \sim |t|^{-1}u$;
the non-integrability of $|t|^{-1}$ 
results in a ``Coulomb''-type contribution of the nonlinear terms which produces 
modified scattering with an additional nonlinear phase correction compared to the linear behavior. More precisely, for reasonable initial data, as time goes to infinity, the  solution to \eqref{eq:NLS} behaves like $(2\pi)^{-1/2}(\ii t)^{-1/2}e^{\ii x^2/(2t)}\,
W\!\Big(\frac{x}{t}\Big)\exp(-\ii\la|W(x/t)|^2\log t)$ for some $W$.

To solve the initial value problems, using complete integrability, modified scattering was proven in the seminal work of Deift-Zhou \cite{DZ} 
without size restriction on the solutions.
Without making use of complete integrability, and
restricting the analysis to small solutions, 
proofs of modified scattering were given by Ozawa \cite{O},
Hayashi-Naumkin \cite{HN}, Lindblad-Soffer \cite{LS}, Kato-Pusateri \cite{KP}, and Ifrim-Tataru \cite{IT}. We refer to the survey by Murphy  \cite{MurphyReview} for more references. 
%

In this paper, we are interested in the opposite direction. Given $W$ as the modified scattering profile, we want to find a solution to \eqref{eq:NLS} achieving this given modified scattering behavior.  This is the so-called final state problem and  the existence of modified wave operators, we refer to Hayashi-Naumkin \cite{HN2}, Ifrim-Tataru \cite{IT}, Kawamoto-Mizutani \cite{KM},  Segata \cite{Se}, and references therein for more details and history. The purpose of this paper is to revisit the final state problem from the viewpoint of space-time resonance computations in the spirit of Kato-Pusateri \cite{KP}. Our purpose is to propose a scheme which can be easily adapted to other settings with a complete forward scattering analysis. One can easily see that our analysis can be easily applied to the  Hartree equation
\begin{equation*}
    \ii\partial_t u+\frac{1}{2}\Delta u=\big(|x|^{-1}     \ast |u|^2\big) u, \qquad (t,x)\in \mathbb{R}\times \mathbb{R}^n,\,n\geq2
\end{equation*}
as in Kato-Pusateri \cite{KP}. We also expect our analysis can be adapted to the perturbed settings like the models considered  in  Chen-Pusateri \cite{CP}, Germain-Pusateri-Rousset \cite{GPR}, Segata \cite{Se} etc.
\subsection{Main theorem}
Before introducing our main results, it is necessary to introduce some notations.

Define the linear Schr\"odinger propagator as
$$
U(t)=e^{\ii t\partial_{xx}/2}.
$$
We use the Fourier transform
$$
\wh f(\xi)=\int_{\R}e^{-\ii x\xi}f(x)\,dx,
\qquad
f(x)=\frac1{2\pi}\int_{\R}e^{\ii x\xi}\wh f(\xi)\,d\xi.
$$
With this convention, $U(t)$ acts in Fourier space by
$$
\reallywidehat{U(t)f}(\xi)=e^{-\ii t\xi^2/2}\wh f(\xi).
$$
Define the profile
$$
f(t)=U(-t)u(t).
$$
\begin{theorem}\label{thm:main}
We fix parameters
\begin{equation}\label{eq:params}
0<\delta<\frac14,\qquad 0<\alpha<\delta,
\end{equation}
and assume small final data\footnote{One can see that from our proof that the $H^2_\xi$ can be weakened to $H_\xi^{3/2+}$. }
\begin{equation}\label{eq:Wsmall}
W\in \Linfty_\xi\cap \Htwo_\xi,
\qquad
\|W\|_{\Linfty_\xi}+\|W\|_{\Htwo_\xi}\le\varepsilon_0,
\end{equation}
for $\varepsilon_0>0$ sufficiently small.

Define the explicit asymptotic profile
\begin{equation}\label{eq:vdef}
v(t,\xi)=W(\xi)e^{-\ii\la|W(\xi)|^2\log t}.
\end{equation}
Assume \eqref{eq:params} and \eqref{eq:Wsmall}, and let $v$ be defined by \eqref{eq:vdef}.
Then there exists $T\ge 2$ and a unique solution $u\in C([T,\infty);\Ltwo_x)$ of \eqref{eq:NLS}
such that with $f(t)=U(-t)u(t)$,
\begin{equation}\label{eq:mainbound}
\sup_{t\ge T}t^\alpha\Big(\|\wh f(t)-v(t)\|_{\Linfty_\xi}+\|\wh f(t)-v(t)\|_{L^2_\xi}+(1+\log t)^{-1}\|\partial_\xi(\wh f(t)-v(t))\|_{\Ltwo_\xi}\Big)<\infty.
\end{equation}
Moreover, for all $t\ge T$ we have the expansion
\begin{equation}\label{eq:u-asymp-main}
u(t,x)=\frac{1}{\sqrt{2\pi}}\frac{e^{\ii x^2/(2t)}}{\sqrt{\ii t}}\,
W\!\Big(\frac{x}{t}\Big)\exp\!\Big(-\ii\la\Big|W\!\Big(\frac{x}{t}\Big)\Big|^2\log t\Big)
+\mathrm{Err}(t,x),
\end{equation}
where the remainder satisfies the pointwise bound
\begin{equation}\label{eq:u-asymp-err}
\|\mathrm{Err}(t)\|_{L^\infty_x}
\le C\,t^{-1/2-\alpha}+C\,t^{-3/4}\,\varepsilon_0(1+\log t),
\qquad t\ge T.
\end{equation}
\end{theorem}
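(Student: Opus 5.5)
Our plan is to write the Duhamel formula for the profile in Fourier space and solve for the difference $w(t,\xi)=\wh f(t,\xi)-v(t,\xi)$ backward from $t=\infty$ by a contraction argument. In the Kato--Pusateri framework the profile satisfies
\begin{equation}
\ii\partial_t\wh f(t,\xi)=\frac{\la}{(2\pi)^2}\iint e^{\ii t\eta\sigma}\wh f(\xi-\eta)\overline{\wh f(\xi-\eta-\sigma)}\wh f(\xi-\sigma)\,d\eta\,d\sigma=:\mathcal N[\wh f,\wh f,\wh f](t,\xi).
\end{equation}
This splits as $\mathcal N=\frac{\la}{t}|\wh f|^2\wh f+\mathcal R[\wh f]$, where $\mathcal R$ is the remainder obtained by writing the phase $e^{\ii t\eta\sigma}$ in physical space and using the $t^{-1/2}$ gain of $e^{\ii t\partial_{xx}/2}-1$ acting on a product (equivalently, stationary phase at $\eta=\sigma=0$). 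The Kato--Pusateri estimate gives
\begin{equation}
\|\mathcal R[f_1,f_2,f_3]\|_{L^\infty_\xi}\lesssim t^{-1-\beta}\prod_j\big(\|\wh f_j\|_{L^\infty}+t^{-\beta'}\|\partial_\xi \wh f_j\|_{L^2}\cdots\big)
\end{equation}
for some $\beta\in(0,1/4)$. Since $v$ satisfies $\ii\partial_tv=\frac{\la}{t}|v|^2v$ exactly, we get
\begin{equation}
\ii\partial_t w=\frac{\la}{t}\big(|v+w|^2(v+w)-|v|^2v\big)+\mathcal R[v+w],
\end{equation}
which we solve as $w(t)=\ii\int_t^\infty(\cdots)\,ds$. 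The forcing $\mathcal R[v]$ depends only on $W$. The point is that it decays like $s^{-1-1/4}(\log s)^k\varepsilon_0^3$, because $\partial_\xi v=O(\log s)$ in $L^2$ by the $H^2$ (indeed $H^{3/2+}$) bound on $W$. This makes the forcing integrable, with a gain $t^{-1/4+}\ll t^{-\alpha}$.

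The linear term $\frac{\la}{t}(2|v|^2w+v^2\bar w)$ is the delicate part, since $\int_t^\infty s^{-1}\|w\|\,ds$ with $\|w\|\sim s^{-\alpha}$ yields only $\alpha^{-1}\varepsilon_0^2t^{-\alpha}$. We handle it by using the smallness $|v|^2\le\varepsilon_0^2$: the map gains a factor $C\varepsilon_0^2/\alpha<1/2$. Alternatively, one can conjugate $w$ by the phase $e^{-\ii\la|W|^2\log t}$ to remove $2|v|^2w$ and keep only $v^2\bar w$, which is small by $\varepsilon_0^2$.

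We therefore work in the space $X$ with the norm
\begin{equation}
\|w\|_X=\sup_{t\ge T}t^{\alpha}\big(\|w\|_{L^\infty}+\|w\|_{L^2}+(1+\log t)^{-1}\|\partial_\xi w\|_{L^2}\big),
\end{equation}
on a small ball. The $\partial_\xi$ estimate is done as in Kato--Pusateri, writing $\partial_\xi$ of the cubic term via $x f$, i.e.\ through $J=x+\ii t\partial_x$. The $\log$ loss comes from $\partial_\xi v$ and from $\partial_\xi$ falling on the $\frac1t|\cdot|^2$ term. Taking $T$ large absorbs any powers of $\log$ against $t^{-(\delta-\alpha)}$, which is where $\alpha<\delta<1/4$ enters. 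Differences are estimated the same way, since all terms are trilinear, which gives the contraction. We then recover $u=U(t)f\in C([T,\infty);L^2)$ solving NLS, and uniqueness holds in the class \eqref{eq:mainbound}.

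For \eqref{eq:u-asymp-main}, we use the factorization $U(t)=M D \mathcal F M$ with $M=e^{\ii x^2/(2t)}$. This gives
\begin{equation}
u(t,x)=\frac{e^{\ii x^2/2t}}{\sqrt{2\pi\ii t}}\wh f(t,x/t)+O\big(t^{-3/4}\|\partial_\xi\wh f\|_{L^2}\big),
\end{equation}
by the standard bound $\|(M-1)g\|$ via $|e^{\ii y^2/2t}-1|\lesssim |y|^{1/2}t^{-1/2}$ together with Gagliardo--Nirenberg. We then insert $\wh f=v+w$, with $\|w\|_{L^\infty}\lesssim t^{-\alpha}$ and $\|\partial_\xi\wh f\|_{L^2}\lesssim\varepsilon_0(1+\log t)$.

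The main obstacle is the weighted ($\partial_\xi$) estimate of the remainder $\mathcal R[v+w]$ with the correct $\log$ growth: the forward Kato--Pusateri bound must be redone backward in time with the explicit $v$ inserted.
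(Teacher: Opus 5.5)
Your plan follows the paper's proof, written on the Fourier side. Your $w=\wh f-v$ is the paper's $\wh g$. Your backward Duhamel formula with forcing $\mathcal R[v]$ is \eqref{eq:backward} combined with Lemma~\ref{lem:forcing}, and your norm is \eqref{eq:XTdef}. The linear terms $\frac{\la}{t}(2|v|^2w+v^2\overline w)$ are absorbed exactly as in the paper, by a factor $C\varepsilon_0^2$ with $C\sim\alpha^{-1}$. The weighted bounds use $J$ as in \eqref{eq:dxiC-bound}, and the expansion comes from \eqref{eq:SP-template}.

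The step that fails as written is the coefficient of the resonant term, and the paper makes the same slip. With $\wh f(\xi)=\int e^{-\ii x\xi}f\,dx$, your (correct) prefactor $(2\pi)^{-2}$ together with $\iint e^{\ii t\eta\sigma}F(\eta,\sigma)\,d\eta\,d\sigma=\frac{2\pi}{t}F(0,0)+\dots$ gives $\mathcal N=\frac{\la}{2\pi t}|\wh f|^2\wh f+\mathcal R[\wh f]$, not $\frac{\la}{t}|\wh f|^2\wh f$. Equivalently, \eqref{eq:SP-template} gives $|u|^2\approx\frac{1}{2\pi t}|\wh f(x/t)|^2$. Hence $v$ from \eqref{eq:vdef} does not cancel the resonant part. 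The equation for $w$ keeps the source $\frac{\la}{t}\big(\frac{1}{2\pi}-1\big)|v|^2v$, of size $\varepsilon_0^3t^{-1}$. After substituting $\tau=\log s$, its contribution to $w(t)$ is a multiple of $|W|^2W\int_{\log t}^\infty e^{-\ii\la|W|^2\tau}\,d\tau$, which does not converge where $W\neq0$.

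So the contraction cannot close. In fact, for $W\not\equiv0$ no solution satisfies \eqref{eq:mainbound} with this $v$, because the phase of $\wh f$ actually evolves like $-\frac{\la}{2\pi}|W|^2\log t$. In the paper, already the first formula for $\widehat{\mathcal T}$ carries $(2\pi)^{-1}$ where $(2\pi)^{-2}$ is needed, and this propagates into Lemma~\ref{lem:trilinear} and Lemma~\ref{lem:forcing}. The repair is only a normalization. Either take $v=We^{-\ii\frac{\la}{2\pi}|W|^2\log t}$, with the same phase in \eqref{eq:u-asymp-main}. Or use the unitary Fourier transform, for which the prefactor in \eqref{eq:u-asymp-main} becomes $(\ii t)^{-1/2}$ and the phase $-\la|W|^2\log t$ is correct. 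With either change, your argument, like the paper's, goes through.

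Two smaller points. First, the only remainder that needs a $\partial_\xi$ bound with integrable time decay is the forcing term $\partial_\xi\mathcal R[v]$. This is Lemma~\ref{lem:dR}, where $H^{3/2+}$ regularity of $v$ is used; it is not $\mathcal R[v+w]$, as your closing paragraph suggests. For the $w$-dependent part, do not split off the resonant term. Apply $J$ to the whole difference $|u|^2u-|u_{\mathrm{app}}|^2u_{\mathrm{app}}$; this gives an $L^2_\xi$ bound of order $\varepsilon_0^2Ms^{-1-\alpha}\log s$, plus higher-order terms, for $\|w\|_X\le M$. This matters, because your space gives no $H^{3/2+}$ control of $w$.

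Second, the bound $|e^{\ii y^2/2t}-1|\lesssim|y|^{1/2}t^{-1/2}$ is false; take $y=\sqrt t$. Use $|e^{\ii y^2/2t}-1|\lesssim|y|t^{-1/2}$ to get $\|(e^{\ii y^2/2t}-1)f\|_{L^2}\lesssim t^{-1/2}\|yf\|_{L^2}$. Then apply Gagliardo--Nirenberg on the Fourier side, which yields the same $t^{-3/4}\|\partial_\xi\wh f\|_{L^2_\xi}$.
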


\subsection{Idea of the proof}
Using \eqref{eq:vdef}, we build an approximate solution $u_{\mathrm{app}}$ that already contains the long-range phase, and solve for a correction by a backward fixed point.

More precisely, schematically, the problem is reduced to find the fixed point to the following map
\begin{equation}
\Phi(g)(t)=\ii\la\int_t^\infty U(-s)\Big(|u|^2u-|u_{\mathrm{app}}|^2u_{\mathrm{app}}\Big)(s)\,\ds
-\ii\int_t^\infty U(-s)\varepsilon(s)\,\ds,
\end{equation}
with $u=u_{\mathrm{app}}+U(\cdot)g$ and $w=U(\cdot)g$.
The stationary phase is used only to estimate the forcing created by the approximation. All nonlinear correction terms are estimated by
dispersive bounds and careful Fourier-side handling of $\partial_\xi$ in the style of \cite{KP}.  We will see that $\varepsilon$ depends only on $W$, which is given (independent of the nonlinear solution). So the final state analysis is reduced to applying the stationary phase analysis to this term, given that the forward scattering analysis is already established.

\section{Preliminaries}\label{sec:tools}
We first recall some  notations, basic inequalities, and dispersive estimates which will be applied in this paper. 
The following dispersive estimate is standard from the stationary phase. For a proof, see for example \cite{GPR}.
\begin{lemma}\label{lem:KPdisp}
There exists $C>0$ such that for all $t\neq0$ and all $h\in \Ltwo_x$ with
$\wh h\in \Linfty_\xi$ and $\partial_\xi\wh h\in \Ltwo_\xi$,
\begin{equation}\label{eq:KPdisp}
\|U(t)h\|_{\Linfty_x}\le C\Big(|t|^{-1/2}\|\wh h\|_{\Linfty_\xi}+|t|^{-3/4}\|\partial_\xi\wh h\|_{\Ltwo_\xi}\Big).
\end{equation}
Moreover, one has
\begin{equation}\label{eq:SP-template}
\Big\|U(t)h-(2\pi)^{-1/2}(\ii t)^{-1/2}e^{\ii x^2/(2t)}\,\wh h\!\Big(\frac{x}{t}\Big)\Big\|_{L^\infty_x}
\le C\,t^{-3/4}\|\partial_\xi \wh h\|_{L^2_\xi}.
\end{equation}
\end{lemma}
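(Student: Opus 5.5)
The plan is to start from the explicit kernel representation of $U(t)$ and factor it in the standard Dollard-type way. With the normalization above, $U(t)h(x)=(2\pi \ii t)^{-1/2}\int e^{\ii (x-y)^2/(2t)}h(y)\,dy$. Expanding the square gives
\begin{equation}
U(t)h(x)=(2\pi\ii t)^{-1/2}e^{\ii x^2/(2t)}\int e^{-\ii xy/t}e^{\ii y^2/(2t)}h(y)\,dy
=(2\pi\ii t)^{-1/2}e^{\ii x^2/(2t)}\,\reallywidehat{M_t h}\Big(\frac{x}{t}\Big),
\end{equation}
where $M_t h(y)=e^{\ii y^2/(2t)}h(y)$. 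Since $(2\pi\ii t)^{-1/2}=(2\pi)^{-1/2}(\ii t)^{-1/2}$, the main term in \eqref{eq:SP-template} is exactly what we get by replacing $M_t$ with the identity. The error is therefore
\begin{equation}
(2\pi t)^{-1/2}\,\big|\reallywidehat{(M_t-1)h}(x/t)\big|\le (2\pi t)^{-1/2}\|(e^{\ii y^2/(2t)}-1)h\|_{L^1_y}.
\end{equation}

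The key step is bounding this $L^1$ norm by $t^{-1/4}\|yh\|_{L^2}$, using $|e^{\ii y^2/(2t)}-1|\le\min(2,y^2/(2t))$.

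On the region $|y|\le R$ we use the second bound and Cauchy--Schwarz:
\begin{equation}
\int_{|y|\le R}\frac{y^2}{2t}|h|\,dy\le \frac1{2t}\Big(\int_{|y|\le R}y^2\,dy\Big)^{1/2}\|yh\|_{L^2}\lesssim t^{-1}R^{3/2}\|yh\|_{L^2}.
\end{equation}
On the region $|y|>R$ we use the bound $2$ and Cauchy--Schwarz against $|y|^{-1}$:
\begin{equation}
\int_{|y|>R}2|h|\,dy\le 2\Big(\int_{|y|>R}y^{-2}\,dy\Big)^{1/2}\|yh\|_{L^2}\lesssim R^{-1/2}\|yh\|_{L^2}.
\end{equation}
Choosing $R=t^{1/2}$ balances the two contributions and gives $t^{-1/4}\|yh\|_{L^2}$. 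By Plancherel with the stated convention, $\|yh\|_{L^2_y}=(2\pi)^{-1/2}\|\partial_\xi\wh h\|_{L^2_\xi}$. This yields \eqref{eq:SP-template} with the bound $Ct^{-3/4}\|\partial_\xi\wh h\|_{L^2}$.

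The dispersive estimate \eqref{eq:KPdisp} then follows from the triangle inequality. The main term is bounded by $(2\pi)^{-1/2}|t|^{-1/2}\|\wh h\|_{L^\infty}$, and the error term is the one just estimated. Negative $t$ is handled identically, or by conjugation symmetry.

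The only delicate point is the rigorous justification. The kernel formula and the Fourier transform of $M_t h$ need $h\in L^1$. This is guaranteed here: $h\in L^2$ and $yh\in L^2$ together give $h\in L^1$ by Cauchy--Schwarz. Alternatively, one can first prove the estimate for Schwartz $h$ and then pass to the limit by density in the norm $\|\wh h\|_{L^\infty}+\|\partial_\xi \wh h\|_{L^2}+\|h\|_{L^2}$. One also has to check that the meaning of $\wh h(x/t)$ for the $L^\infty$ function $\wh h$ is consistent; since $\partial_\xi\wh h\in L^2$, $\wh h$ is continuous, so it is defined pointwise.
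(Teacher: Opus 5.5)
Your proof is correct. The kernel $(2\pi\ii t)^{-1/2}e^{\ii(x-y)^2/(2t)}$ matches the paper's normalization of $U(t)$ and of the Fourier transform, and the factorization produces the main term of \eqref{eq:SP-template} exactly. The cut at $|y|=|t|^{1/2}$, combined with $|e^{\ii y^2/(2t)}-1|\le\min(2,y^2/(2|t|))$, gives $\|(M_t-1)h\|_{L^1}\lesssim|t|^{-1/4}\|yh\|_{L^2}$ with no $\|h\|_{L^2}$ term, which is what the sharp $t^{-3/4}$ bound requires.

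The paper gives no argument of its own: it calls the estimate standard stationary phase and cites \cite{GPR}. The stationary-phase proof it has in mind works on the Fourier side. One writes $U(t)h(x)=\frac{1}{2\pi}e^{\ii x^2/(2t)}\int e^{-\ii t(\xi-\xi_0)^2/2}\wh h(\xi)\,d\xi$ with $\xi_0=x/t$, and pulls out $\wh h(\xi_0)$ to get the main term from a Gaussian integral. The contribution of $\wh h(\xi)-\wh h(\xi_0)$ is then controlled by $|\wh h(\xi)-\wh h(\xi_0)|\le|\xi-\xi_0|^{1/2}\|\partial_\xi\wh h\|_{L^2}$ for $|\xi-\xi_0|\lesssim|t|^{-1/2}$, and by an integration by parts in $\xi$ beyond that scale. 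Your physical-space cut at $|y|\sim|t|^{1/2}$ is the dual of that frequency cut at $|\xi-\xi_0|\sim|t|^{-1/2}$.

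Your route is shorter and more elementary. Every integral is absolutely convergent once $h\in L^1$, so you need no regularization of conditionally convergent oscillatory integrals, no Gaussian evaluation, and no integration by parts. It also uses the same device, bounding $|e^{\ii z}-1|$ by a power of $|z|$, that the paper uses in Lemma \ref{lem:Rlinf}.

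Its limitation is that it rests on the explicit Gaussian kernel of the free propagator. The Fourier-side stationary phase uses no kernel formula, so it carries over to the distorted Fourier transforms arising in the potential settings of \cite{GPR,CP,Se}. Those are exactly the models the paper names as targets for its scheme.
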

To prepare the trilinear analysis later on, we introduce the notation
\begin{equation}
    \mathcal{T}(f_1,f_2,f_3)(s):= \ii U(-s)\Big((U(s)f_1)(\overline{U(s)f_2})( U(s)f_3)\Big).
\end{equation}
We record some computations following Kato-Pusateri \cite{KP}. 
Taking the Fourier transform, it follows that
\begin{equation}
    \widehat{\mathcal{T}}(f_1,f_2,f_3)(s)= (2\pi)^{-1}\ii
 \iint
e^{is\eta(\xi-\sigma)}
\widehat f_1(\xi-\eta)\,
\overline{\widehat f_2(\sigma-\eta)}\,
\widehat f_3(\sigma)
\, d\eta d\sigma
\end{equation}
since
\[
\frac12\big[\xi^2-(\xi-\eta)^2+(\sigma-\eta)^2-\sigma^2\big]
= \eta(\xi-\sigma).
\]
Changing variable in the \(\sigma\)-integral, with the Plancherel theorem, it can also be written as 
\begin{align}
    \widehat{\mathcal{T}}(f_1,f_2,f_3)(s)
&= (2\pi)^{-1}\ii
\iint
e^{is\eta\sigma}
\widehat f_1(\xi-\eta)\,
\overline{\widehat f_2(\xi-\sigma-\eta)}\,
\widehat f_3(\xi-\sigma)
\, d\eta d\sigma\\
&=
(2\pi)^{-1}\ii
 \iint
\mathcal F_{\eta,\sigma}\!\left[e^{is\eta\sigma}\right](\eta',\sigma')
\mathcal F^{-1}_{\eta,\sigma}[F](\eta',\sigma',\xi)
\, d\eta' d\sigma'  \\
&= (2\pi)^{-1}\ii
\frac{1}{s}\iint
 e^{-i\eta'\sigma'/s}
\mathcal F^{-1}_{\eta,\sigma}[F](s,\eta',\sigma',\xi)
\, d\eta' d\sigma',
\end{align}
where
\[
F(\eta,\sigma,\xi)
:= \widehat f_1(\xi-\eta)\,
\overline{\widehat f_2(\xi-\sigma-\eta)}\,
\widehat f_3(\xi-\sigma).
\]
Then one has
\begin{align}
      \widehat{\mathcal{T}}(f_1,f_2,f_3)(s)&=
(2\pi)^{-1}\ii
 \frac{1}{s}
\iint \mathcal F^{-1}_{\eta,\sigma}[F](\eta',\sigma',\xi)
\, d\eta' d\sigma'  \\
&\quad
+(2\pi)^{-1}\ii
 \frac{1}{s}
\iint \big(e^{-i\eta'\sigma'/s}-1\big)
\mathcal F^{-1}_{\eta,\sigma}[F](\eta',\sigma',\xi)
\, d\eta' d\sigma'  \\
&=
 \ii \frac{1}{s}
\widehat f_1(\xi)\overline{\widehat f_2(\xi)}\widehat f_3(\xi)
+  R(f_1,f_2,f_3)(s,\xi),
\end{align}
where the remainder is
\begin{equation}
    R(f_1,f_2,f_3)(s,\xi):=(2\pi)^{-1}\ii
 \frac{1}{s}
\iint \big(e^{-i\eta'\sigma'/s}-1\big)
\mathcal F^{-1}_{\eta,\sigma}[F](\eta',\sigma',\xi)
\, d\eta' d\sigma'. 
\end{equation}
From the computations of Kato-Pusateri \cite{KP}, we have the following estimates:
\begin{lemma}\label{lem:trilinear}Let $0<\delta<\frac{1}{4}$. One has
\begin{equation}
        \widehat{\mathcal{T}}(f_1,f_2,f_3)(s)= \ii \frac{1}{s}
\widehat f_1(\xi)\overline{\widehat f_2(\xi)}\widehat f_3(\xi)
+  R(f_1,f_2,f_3)(s,\xi)
\end{equation}
and
\begin{equation}
    | R(f_1,f_2,f_3)(s,\xi)|
\lesssim
s^{-1-\delta}
\|f_1\|_{H_x^{0,1}}\|f_2\|_{H_x^{0,1}}\|f_3\|_{H_x^{0,1}}.
\end{equation}
\end{lemma}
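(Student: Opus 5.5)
The identity is already established by the computation preceding the statement, so only the bound on $R$ needs proof. The plan follows Kato--Pusateri: use $|e^{-\ii\eta'\sigma'/s}-1|\lesssim |\eta'\sigma'/s|^{\delta}$, valid for $0\le\delta\le1$, and trade the weight $|\eta'|^\delta|\sigma'|^\delta$ for a small amount of regularity of $F$ in $(\eta,\sigma)$. This gives
\[
|R(s,\xi)|\lesssim s^{-1-\delta}\iint |\eta'|^\delta|\sigma'|^\delta\,\big|\mathcal F^{-1}_{\eta,\sigma}[F](\eta',\sigma',\xi)\big|\,d\eta'd\sigma',
\]
and the task is to bound the right-hand integral by $\prod_j\|f_j\|_{H^{0,1}_x}=\prod_j\|\wh f_j\|_{H^1_\xi}$.

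The first step is to compute $\mathcal F^{-1}_{\eta,\sigma}F$ explicitly. Since $F$ is a product of three functions of $\xi-\eta$, $\xi-\sigma-\eta$ and $\xi-\sigma$, it is more convenient to undo the Fourier transform in $\xi$ as well. Working back from the physical-space form $\ii U(-s)\big((U(s)f_1)\overline{(U(s)f_2)}(U(s)f_3)\big)$ with $s=1$ removed, one expects $\mathcal F^{-1}_{\eta,\sigma}F(\eta',\sigma',\xi)$ to be, up to phases and constants, an integral of the form
\[
\int e^{-\ii x\xi}\,f_1(x+\cdot)\,\overline{f_2(x+\cdots)}\,f_3(x+\cdots)\,dx,
\]
with the shifts linear in $(\eta',\sigma')$. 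A cleaner route avoids this formula: the $L^1_{\eta',\sigma'}$ norm of $|\eta'|^\delta|\sigma'|^\delta\,\mathcal F^{-1}F$ is bounded by a weighted $L^2$ norm, via Cauchy--Schwarz with the weight $\langle\eta'\rangle^{-1/2-}\langle\sigma'\rangle^{-1/2-}$. That is,
\[
\iint |\eta'|^\delta|\sigma'|^\delta|\mathcal F^{-1}F|\lesssim \big\|\langle\eta'\rangle^{1/2+\delta+}\langle\sigma'\rangle^{1/2+\delta+}\mathcal F^{-1}F\big\|_{L^2_{\eta',\sigma'}}.
\]
By Plancherel in $(\eta,\sigma)$, this is a mixed fractional Sobolev norm of $F$ of order $s_1=s_2=1/2+\delta+<3/4$ in each of the variables $\eta$ and $\sigma$. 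The restriction $\delta<1/4$ enters here: we need $1/2+\delta<1$ in each variable, so that one $\partial_\eta$ and one $\partial_\sigma$, interpolated with $L^2$, suffice.

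The main step, and the main obstacle, is bounding this norm uniformly in $\xi$ by $\prod_j\|\wh f_j\|_{H^1}$. Each derivative falls on at most two of the three factors. For instance, $\partial_\eta\partial_\sigma F$ contains terms such as $\wh f_1'(\xi-\eta)\,\overline{\wh f_2'}(\xi-\sigma-\eta)\,\wh f_3(\xi-\sigma)$, whose $L^2_{\eta,\sigma}$ norm is at most $\|\wh f_1'\|_{L^2}\|\wh f_2'\|_{L^2}\|\wh f_3\|_{L^\infty}$ after the change of variables $(\eta,\sigma)\mapsto(\xi-\eta,\xi-\sigma-\eta)$. There are also terms like $\overline{\wh f_2''}$, coming from the mixed derivative hitting the middle factor twice. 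These exceed $H^1$ and are exactly why one interpolates: one uses only fractional order $1/2+\delta$ in each variable, splitting the derivatives by Leibniz and product estimates so that each factor carries at most one full derivative, and uses $H^1_\xi\subset L^\infty_\xi$ for the remaining factors.

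To make the interpolation rigorous, I would follow Kato--Pusateri. Split the region into $|\eta'|\le|\sigma'|$ and its complement, so that the weight is controlled by $|\eta'|^{2\delta}$ or $|\sigma'|^{2\delta}$. Alternatively, change variables $(\eta,\sigma)\mapsto(\eta,\eta+\sigma)$ so that the middle factor depends on a single variable. In either form the bound needs only fractional order $<1$ derivatives, one variable per factor, and $L^2$ integrability of each factor against the others is secured by Cauchy--Schwarz. This yields $|R|\lesssim s^{-1-\delta}\prod_j\|\wh f_j\|_{H^1_\xi}$, as claimed.
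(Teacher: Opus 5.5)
Your reduction to bounding $\iint|\eta'|^\delta|\sigma'|^\delta|\mathcal F^{-1}_{\eta,\sigma}F|\,d\eta'd\sigma'$ is right. The step you call the main one fails, however. After Cauchy--Schwarz against $\langle\eta'\rangle^{-1/2-}\langle\sigma'\rangle^{-1/2-}$, the norm $\|\langle\eta'\rangle^{1/2+\delta+}\langle\sigma'\rangle^{1/2+\delta+}\mathcal F^{-1}_{\eta,\sigma}F\|_{L^2_{\eta',\sigma'}}$ is not controlled by $\prod_j\|f_j\|_{H^{0,1}_x}$, and it can be infinite.

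The reason is that the middle factor $\overline{\wh f_2(\xi-\sigma-\eta)}$ depends on $\eta+\sigma$. Both fractional derivatives can fall on it, for a total order $1+2\delta>1$, and no interpolation brings that back down to $H^1$. In physical variables, $\mathcal F^{-1}_{\eta,\sigma}F(\eta',\sigma',\xi)$ is, up to a constant and a unimodular factor, $\int e^{-\ii x\xi}f_1(x-\eta')\overline{f_2(x)}f_3(x-\sigma')\dx$. This is the formula you started to write down. Near the diagonal $\eta'\approx\sigma'\approx x$, both weights are transferred to $f_2$.

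Here is a concrete example. Take $\xi=0$, $f_1=f_3=e^{-x^2}$ and $f_2=\langle x\rangle^{-3/2-\delta}$, so $\langle x\rangle f_j\in\Ltwo$ for all $j$. All functions are positive, so there is no cancellation, and the kernel is $\gtrsim\langle\eta'\rangle^{-3/2-\delta}$ on $\{|\eta'-\sigma'|\le 1\}$. Hence
\[
\big\|\langle\eta'\rangle^{\frac12+\delta}\langle\sigma'\rangle^{\frac12+\delta}\mathcal F^{-1}_{\eta,\sigma}F\big\|_{L^2_{\eta',\sigma'}}^2\gtrsim\int_\R\langle\eta'\rangle^{2+4\delta-3-2\delta}\,d\eta'=\int_\R\langle\eta'\rangle^{-1+2\delta}\,d\eta'=\infty,
\]
while the $L^1$ quantity you actually need is finite for this example.

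Your fallbacks do not repair this. Splitting into $|\eta'|\le|\sigma'|$ only redistributes the $\delta$-weights, but the loss already comes from the Cauchy--Schwarz weight $\langle\eta'\rangle^{1/2+}\langle\sigma'\rangle^{1/2+}$. No linear change of variables makes all three forms $\eta$, $\eta+\sigma$, $\sigma$ single coordinates. So ``one variable per factor'' is impossible, and with product weights the problem just moves to $\wh f_1$ or $\wh f_3$. Relatedly, $1/2+\delta<1$ would only require $\delta<1/2$, so your account of where $\delta<1/4$ enters is also off.

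The route you set aside is the one that works, and it is what the paper does in the proof of Lemma \ref{lem:Rlinf}, following \cite{KP}: never leave $L^1$.

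1. Bound $|\mathcal F^{-1}_{\eta,\sigma}F|\lesssim\int|f_1(x-\eta')|\,|f_2(x)|\,|f_3(x-\sigma')|\dx$.
2. Do the $\eta'$ and $\sigma'$ integrals first, using $|\eta'|^\delta\lesssim|x-\eta'|^\delta+|x|^\delta$ and likewise for $\sigma'$. This gives
\[
\iint|\eta'|^\delta|\sigma'|^\delta\big|\mathcal F^{-1}_{\eta,\sigma}F\big|\,d\eta'd\sigma'\lesssim\|\langle x\rangle^{\delta}f_1\|_{\Lone}\,\|\langle x\rangle^{2\delta}f_2\|_{\Lone}\,\|\langle x\rangle^{\delta}f_3\|_{\Lone}.
\]
3. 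Finish with $\|\langle x\rangle^{2\delta}f\|_{\Lone}\le\|\langle x\rangle^{2\delta-1}\|_{\Ltwo}\|\langle x\rangle f\|_{\Ltwo}$, which is finite exactly when $\delta<1/4$.

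That is the real role of the hypothesis. The two weights pile up on the middle profile. Each profile pays the $L^1\to L^2$ price of $1/2+$ only once, in its own variable, whereas your Cauchy--Schwarz in $(\eta',\sigma')$ charges it twice to $f_2$.
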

\begin{proof}
    See \cite[Section 2]{KP} and a similar proof for Lemma \ref{lem:Rlinf} later on.
\end{proof}
\section{Approximate solution and forcing identity}\label{sec:approx}

\subsection{The approximate profile and approximate solution}
In this section, we define the approximate solution given $W$ and its properties.

Given $W$, we define $v$ as in \eqref{eq:vdef}, then
\begin{equation}\label{eq:vODE}
\partial_tv(t,\xi)=-\frac{\ii\la}{t}|v(t,\xi)|^2v(t,\xi).
\end{equation}
Let $\varphi(t)$ be defined by
\begin{equation}\label{eq:phi-def}
\wh\varphi(t,\xi)=v(t,\xi).
\end{equation}
Define the approximate solution
\begin{equation}\label{eq:uapp-def}
u_{\mathrm{app}}(t)=U(t)\varphi(t).
\end{equation}
Define the forcing error
\begin{equation}\label{eq:eps-def}
\varepsilon(t):=\ii U(t)\partial_t\varphi(t)-\la|u_{\mathrm{app}}(t)|^2u_{\mathrm{app}}(t),
\end{equation}
so that $u_{\mathrm{app}}$ solves the forced equation
\begin{equation}\label{eq:uapp-forced}
\ii\partial_tu_{\mathrm{app}}+\frac12\partial_{xx}u_{\mathrm{app}}
=\la|u_{\mathrm{app}}|^2u_{\mathrm{app}}+\varepsilon.
\end{equation}
For a function $h=h(\xi)$ define
\begin{equation}\label{eq:Rdef}
R[h](t,\xi)=\frac{1}{2\pi t}\iint_{\R^2}\Big(e^{-\ii\eta'\sigma'/t}-1\Big)\mathcal{F}^{-1}_{\eta,\sigma}[F_h](t,\eta',\sigma',\xi)\,\deta'\,\dsig',
\end{equation}
where
\begin{equation}
    F_h(\eta,\sigma,\xi)=
h(t,\xi-\sigma)\,h(t,\xi+\eta)\,\overline{h(t,\xi+\eta-\sigma)}.
\end{equation}
%
With notations above, the key identity is the following:
\begin{lemma}\label{lem:forcing}
With $\varepsilon$ defined by \eqref{eq:eps-def} and $v$ by \eqref{eq:vdef},
\begin{equation}\label{eq:forcing-id}
\reallywidehat{U(-t)\varepsilon}(t,\xi)=-\ii\lambda R[v](t,\xi).
\end{equation}
\end{lemma}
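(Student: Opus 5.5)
The proof will be a direct computation in Fourier space that combines the definition \eqref{eq:eps-def} with the trilinear decomposition of Lemma \ref{lem:trilinear}. Applying $U(-t)$ to \eqref{eq:eps-def} gives
\begin{equation}
U(-t)\varepsilon(t)=\ii\partial_t\varphi(t)-\la U(-t)\big(|u_{\mathrm{app}}|^2u_{\mathrm{app}}\big)(t).
\end{equation}
Since $u_{\mathrm{app}}=U(t)\varphi$, the second term is $\la\,\ii\,\mathcal{T}(\varphi,\varphi,\varphi)(t)$, up to the factor $\ii$ built into the definition of $\mathcal{T}$. Precisely, $U(-t)(|u_{\mathrm{app}}|^2u_{\mathrm{app}})=-\ii\,\mathcal{T}(\varphi,\varphi,\varphi)(t)$.

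On the Fourier side the first term is $\ii\partial_t v=\ii\cdot(-\ii\la/t)|v|^2v=(\la/t)|v|^2v$, by \eqref{eq:vODE} and \eqref{eq:phi-def}. For the second term I apply the decomposition of Lemma \ref{lem:trilinear} with $f_1=f_2=f_3=\varphi(t)$:
\begin{equation}
\widehat{\mathcal{T}}(\varphi,\varphi,\varphi)(t)=\frac{\ii}{t}|v|^2v+R(\varphi,\varphi,\varphi)(t,\xi).
\end{equation}
This decomposition is purely algebraic: it is the Plancherel identity together with $\mathcal{F}_{\eta,\sigma}[e^{\ii t\eta\sigma}]$, so no bounds are needed for this step.

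Combining the two terms,
\begin{equation}
\reallywidehat{U(-t)\varepsilon}=\frac{\la}{t}|v|^2v+\ii\la\cdot(-\ii)\cdot\Big(\frac{\ii}{t}|v|^2v+R(\varphi,\varphi,\varphi)\Big)\cdot(-1)\cdot\ldots
\end{equation}
I will track the signs carefully. The term $-\la U(-t)(\cdots)$ equals $\ii\la\,\mathcal{T}$, whose Fourier transform is $\ii\la\big(\tfrac{\ii}{t}|v|^2v+R\big)=-\tfrac{\la}{t}|v|^2v+\ii\la R$. The resonant part therefore cancels exactly against $\ii\partial_t v$; this is the point of choosing $v$ to solve \eqref{eq:vODE}. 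What remains is $\ii\la R(\varphi,\varphi,\varphi)$.

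The main obstacle is purely bookkeeping: matching $\ii\la R(\varphi,\varphi,\varphi)$ with $-\ii\la R[v]$ as defined in \eqref{eq:Rdef}. Two things differ between the formulas.
\begin{itemize}
\item The remainder $R$ in Lemma \ref{lem:trilinear} carries the prefactor $(2\pi)^{-1}\ii/s$, while $R[h]$ in \eqref{eq:Rdef} has prefactor $1/(2\pi t)$ with no $\ii$.
\item The arrangement of arguments differs: $F$ uses $\widehat f_1(\xi-\eta)\overline{\widehat f_2(\xi-\sigma-\eta)}\widehat f_3(\xi-\sigma)$, whereas $F_h$ uses $h(\xi-\sigma)h(\xi+\eta)\overline{h(\xi+\eta-\sigma)}$.
\end{itemize}
I plan to reconcile them in two steps. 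First, substitute $\eta\mapsto-\eta$ in the $(\eta,\sigma)$ integral. This sends $e^{\ii t\eta\sigma}$ to $e^{-\ii t\eta\sigma}$ and turns $F$ into $F_v$, with all three factors equal to $v$. Second, redo the Fourier computation with the conjugated phase, using $\mathcal{F}[e^{-\ii t\eta\sigma}]$ in place of $\mathcal{F}[e^{\ii t\eta\sigma}]$. This produces the kernel $e^{+\ii\eta'\sigma'/t}$, which can be converted back to $e^{-\ii\eta'\sigma'/t}$ by the reflection $\eta'\mapsto-\eta'$ on the $\mathcal{F}^{-1}$ side. The constant in front also flips from $\ii$ to $-\ii$, because $\int e^{\pm\ii t\eta\sigma}$ gives $2\pi/t$ times a phase with the appropriate sign convention.

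Tracking these factors of $\pm\ii$ should yield exactly $\ii\la R(\varphi,\varphi,\varphi)=-\ii\la R[v]$, which is the identity \eqref{eq:forcing-id}. If the sign conventions do not line up on the first pass, I would instead verify the identity directly. The plan there is to write $R[v]$ as $\frac{1}{2\pi t}\iint(\ldots)$ minus its value at phase $1$, identify the phase-$1$ part as $\frac1t|v|^2v$ (it evaluates $F_v$ at $\eta=\sigma=0$), and identify the full-phase part as a constant multiple of $\widehat{\mathcal{T}}$.
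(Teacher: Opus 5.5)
Your reduction is the paper's argument verbatim. Write $U(-t)(|u_{\mathrm{app}}|^2u_{\mathrm{app}})=-\ii\,\mathcal{T}(\varphi,\varphi,\varphi)(t)$, apply Lemma~\ref{lem:trilinear} and \eqref{eq:vODE}, and the resonant terms cancel, leaving $\reallywidehat{U(-t)\varepsilon}=\ii\la R(\varphi,\varphi,\varphi)(t,\xi)$; that much is correct.

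The gap is the final identification $R(\varphi,\varphi,\varphi)=-R[v]$, which your bookkeeping does not achieve. After $\eta\mapsto-\eta$ and Parseval you do get $R(\varphi,\varphi,\varphi)=\frac{\ii}{2\pi t}\iint(e^{+\ii\eta'\sigma'/t}-1)\,\mathcal{F}^{-1}_{\eta,\sigma}[F_v](\eta',\sigma',\xi)\,d\eta'\,d\sigma'$. But the reflection $\eta'\mapsto-\eta'$ sends $\mathcal{F}^{-1}_{\eta,\sigma}[F_v](\eta',\sigma')$ back to $\mathcal{F}^{-1}_{\eta,\sigma}[F](\eta',\sigma')$, so it merely undoes your first substitution. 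There is also no constant flip. We have $\mathcal{F}_{\eta,\sigma}[e^{\pm\ii t\eta\sigma}]=\frac{2\pi}{t}e^{\mp\ii\eta'\sigma'/t}$ with no extra unimodular factor, since the form $\eta\sigma$ has signature zero. In any case you would need to turn the prefactor $\ii$ into $-1$, not into $-\ii$. Your fallback fails for the same reason. By Parseval, the full-phase part of $R[v]$ is a multiple of $\iint e^{+\ii t\eta\sigma}F_v\,d\eta\,d\sigma$, which is the trilinear form at time $-t$. By contrast, $\widehat{\mathcal{T}}(\varphi,\varphi,\varphi)(t)$ is a multiple of $\iint e^{-\ii t\eta\sigma}F_v\,d\eta\,d\sigma$.

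In fact no sign-tracking can work, because with \eqref{eq:Rdef} read literally the identity is false. Let $F$ be the kernel of Lemma~\ref{lem:trilinear} with $\wh f_1=\wh f_2=\wh f_3=v$, and set $I_\pm:=\frac{t}{2\pi}\iint e^{\pm\ii t\eta\sigma}F\,d\eta\,d\sigma$. By Parseval, $R(\varphi,\varphi,\varphi)=\frac{c\,\ii}{t}(I_+-F(0,0))$ and $R[v]=\frac{c}{t}(I_--F(0,0))$ with the same constant $c>0$. Moreover $I_\pm=F(0,0)\pm\frac{\ii}{t}\partial_\eta\partial_\sigma F(0,0)+\dots$. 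So to leading order $R(\varphi,\varphi,\varphi)\approx-\frac{c}{t^2}\partial_\eta\partial_\sigma F(0,0)$, while $-R[v]\approx+\frac{\ii c}{t^2}\partial_\eta\partial_\sigma F(0,0)$. Here $\partial_\eta\partial_\sigma F(0,0)=2|\partial_\xi v|^2v+\overline{v}(\partial_\xi v)^2+v^2\partial_\xi^2\overline{v}$, which is generically nonzero.

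The paper's proof does not catch this because it simply asserts the decomposition $\frac1t|v|^2v+\ii R[v]$. The culprit is the ordering of arguments in $F_h$, which conjugates the effective kernel, together with a unimodular factor. What your computation does prove is $\reallywidehat{U(-t)\varepsilon}=\ii\la R(\varphi,\varphi,\varphi)$. Equivalently, it is $-\la$ times \eqref{eq:Rdef} with $F_h$ replaced by $h(\xi-\eta)\overline{h(\xi-\sigma-\eta)}h(\xi-\sigma)$. This is all that is used later. Lemmas~\ref{lem:Rlinf} and \ref{lem:dR} only use $|e^{\pm\ii z}-1|\le C_\delta|z|^\delta$ and absolute-value or $L^2$ bounds on $\mathcal{F}^{-1}_{\eta,\sigma}[\cdot]$, and these are insensitive to the change. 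So state and prove the lemma in that form rather than forcing a match.

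Relatedly, carry out the ``purely algebraic'' decomposition with the non-unitary Fourier transform fixed in the paper, and the resonant coefficient comes out as $\frac{\ii}{2\pi t}$ rather than $\frac{\ii}{t}$. The exact cancellation against \eqref{eq:vODE} therefore needs either the unitary normalization of \cite{KP} or the phase $\frac{\la}{2\pi}|W|^2\log t$ in \eqref{eq:vdef}.
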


\begin{proof}
Apply $U(-t)$ to \eqref{eq:eps-def}:
$$
U(-t)\varepsilon=\ii\partial_t\varphi-\la\,U(-t)\big(|u_{\mathrm{app}}|^2u_{\mathrm{app}}\big).
$$
Taking Fourier transforms and using $\wh\varphi=v$ gives
$$
\reallywidehat{U(-t)\varepsilon}(t,\xi)=\ii\partial_tv(t,\xi)-\la\,\reallywidehat{U(-t)\big(|U(t)\varphi|^2U(t)\varphi\big)}(\xi).
$$
A standard computation, Lemma \ref{lem:trilinear},  (also see \cite{KP}) shows
$$
\reallywidehat{U(-t)\big(|U(t)\varphi|^2U(t)\varphi\big)}(\xi)=\frac1t|v(t,\xi)|^2v(t,\xi)+ \ii R[v](t,\xi),
$$
where $R[v]$ is exactly \eqref{eq:Rdef}.
Using the ODE \eqref{eq:vODE}, we have $\ii\partial_tv=(\la/t)|v|^2v$, hence \eqref{eq:forcing-id}.
\end{proof}


\begin{proposition}\label{prop:uapp-Linf}
Let $u_{\mathrm{app}}$ be defined by \eqref{eq:uapp-def}. Then for $s\geq 1$, one has
\begin{equation}\label{eq:uappLinf}
\|u_{\mathrm{app}}(s)\|_{\Linfty_x}\le C\varepsilon_0 s^{-1/2}+C\varepsilon_0(1+\log s)s^{-3/4}.
\end{equation}
\end{proposition}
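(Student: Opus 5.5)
The plan is to apply the dispersive estimate \eqref{eq:KPdisp} of Lemma \ref{lem:KPdisp} with $h=\varphi(s)$, so that $\wh h=v(s,\cdot)$. This gives
\begin{equation}
\|u_{\mathrm{app}}(s)\|_{\Linfty_x}\le C\Big(s^{-1/2}\|v(s)\|_{\Linfty_\xi}+s^{-3/4}\|\partial_\xi v(s)\|_{\Ltwo_\xi}\Big),
\end{equation}
and it remains to bound the two norms of the explicit profile $v(s,\xi)=W(\xi)e^{-\ii\la|W(\xi)|^2\log s}$ in terms of $\varepsilon_0$ and $\log s$.

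The $\Linfty$ term is immediate. Since the phase factor is unimodular, $|v(s,\xi)|=|W(\xi)|$, so $\|v(s)\|_{\Linfty_\xi}=\|W\|_{\Linfty_\xi}\le\varepsilon_0$ by \eqref{eq:Wsmall}. This produces the first term $C\varepsilon_0 s^{-1/2}$.

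For the derivative we differentiate directly:
\begin{equation}
\partial_\xi v=e^{-\ii\la|W|^2\log s}\Big(\partial_\xi W-\ii\la\log s\,\partial_\xi(|W|^2)\,W\Big),
\end{equation}
with $\partial_\xi(|W|^2)=2\,\mathrm{Re}(\overline W\partial_\xi W)$. The first piece has $\Ltwo_\xi$ norm at most $\|W\|_{\Htwo_\xi}\le\varepsilon_0$. The second is pointwise bounded by $2\log s\,|W|^2|\partial_\xi W|$, so its $\Ltwo_\xi$ norm is at most $2\log s\,\|W\|_{\Linfty}^2\|\partial_\xi W\|_{\Ltwo}\le 2\varepsilon_0^3\log s$. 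Since $\varepsilon_0\le1$, we get $\|\partial_\xi v(s)\|_{\Ltwo_\xi}\le C\varepsilon_0(1+\log s)$ for $s\ge1$, where $\log s\ge0$. Inserting this into the dispersive bound gives the second term $C\varepsilon_0(1+\log s)s^{-3/4}$ and completes the proof of \eqref{eq:uappLinf}.

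There is no real obstacle. The only points to check are that the hypotheses of Lemma \ref{lem:KPdisp} hold, namely $\varphi(s)\in\Ltwo_x$ with $\wh\varphi\in\Linfty_\xi$ and $\partial_\xi\wh\varphi\in\Ltwo_\xi$, which follow from $W\in\Ltwo\cap\Linfty$ and the computation above, and that the logarithmic growth coming from differentiating the phase is tracked correctly. This logarithmic loss is harmless because it is paired with the faster decay $s^{-3/4}$.
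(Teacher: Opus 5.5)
Your proposal is correct and follows the paper's proof: apply \eqref{eq:KPdisp} with $h=\varphi(s)$, use $\|v(s)\|_{\Linfty_\xi}=\|W\|_{\Linfty_\xi}\le\varepsilon_0$, and bound $\|\partial_\xi v(s)\|_{\Ltwo_\xi}\le C\varepsilon_0(1+\log s)$. The only difference is that you write out the chain-rule computation for $\partial_\xi v$ explicitly, where the paper just asserts the logarithmic bound; your computation even gives the slightly sharper $\varepsilon_0+2\varepsilon_0^3\log s$.
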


\begin{proof}
Apply Lemma \ref{lem:KPdisp} to $h=\varphi(s)$.
Since $\wh\varphi=v(s)$, we get
$$
\|u_{\mathrm{app}}(s)\|_\infty=\|U(s)\varphi(s)\|_\infty
\le C\Big(s^{-1/2}\|v(s)\|_{\Linfty_\xi}+s^{-3/4}\|\partial_\xi v(s)\|_{\Ltwo_\xi}\Big).
$$
Now $\|v(s)\|_{\Linfty_\xi}=\|W\|_{\Linfty_\xi}\le\varepsilon_0$.
Also $\partial_\xi v(s,\xi)$ contains a term proportional to $(\log s)$, hence
$$
\|\partial_\xi v(s)\|_2\le C\varepsilon_0(1+\log s).
$$
This gives \eqref{eq:uappLinf}. 
\end{proof}


\section{Function space and the backward fixed point}\label{sec:fixedpoint}

Define the correction
$$
w(t)=u(t)-u_{\mathrm{app}}(t),
\qquad
g(t)=U(-t)w(t).
$$
Then
$$
u(t)=u_{\mathrm{app}}(t)+w(t),
\qquad
f(t)=U(-t)u(t)=\varphi(t)+g(t),
\qquad
\wh f(t,\xi)=v(t,\xi)+\wh g(t,\xi).
$$
Subtract \eqref{eq:uapp-forced} from the exact NLS for $u$ to get
$$
\ii\partial_tw+\frac12\partial_{xx}w=\la\Big(|u|^2u-|u_{\mathrm{app}}|^2u_{\mathrm{app}}\Big)-\varepsilon.
$$
Applying $U(-t)$ and using unitarity of $U(t)$ on $\Ltwo$ gives
\begin{equation}\label{eq:g-eq}
\partial_tg(t)=-\ii\la\,U(-t)\Big(|u|^2u-|u_{\mathrm{app}}|^2u_{\mathrm{app}}\Big)(t)+\ii\,U(-t)\varepsilon(t).
\end{equation}
We solve backward with terminal condition $g(t)\to0$ as $t\to\infty$:
\begin{equation}\label{eq:backward}
g(t)=\ii\la\int_t^\infty U(-s)\Big(|u|^2u-|u_{\mathrm{app}}|^2u_{\mathrm{app}}\Big)(s)\,\ds
-\ii\int_t^\infty U(-s)\varepsilon(s)\,\ds.
\end{equation}
For $T\ge2$ define
\begin{equation}\label{eq:XTdef}
\|g\|_{X_T}=\sup_{t\ge T}t^\alpha\Big(\|\wh g(t)\|_{\Linfty_\xi}+\|\wh g(t)\|_{L^2_\xi}+(1+\log t)^{-1}\|\partial_\xi\wh g(t)\|_{\Ltwo_\xi}\Big).
\end{equation}
Let
$$
B_M=\{g\in X_T:\ \|g\|_{X_T}\le M\}.
$$

\subsection{The fixed point map}
Define $\Phi$ by the right-hand side of \eqref{eq:backward}:
\begin{equation}\label{eq:PhiDef}
\Phi(g)(t)=\ii\la\int_t^\infty U(-s)\Big(|u|^2u-|u_{\mathrm{app}}|^2u_{\mathrm{app}}\Big)(s)\,\ds
-\ii\int_t^\infty U(-s)\varepsilon(s)\,\ds,
\end{equation}
with $u=u_{\mathrm{app}}+U(\cdot)g$ and $w=U(\cdot)g$.

We write
\begin{equation}\label{eq:Phi}
\Phi(g)=:\Phi_{\mathrm{nl}}(g)+\Phi_\varepsilon,
\end{equation}
where $\Phi_\varepsilon$ is independent of $g$.

\section{Analysis of the forcing term}\label{sec:forcing}

In this section, we study  bounds for $R[v]$ and $\partial_\xi R[v]$.
These estimates are used \emph{only} for the forcing term $\Phi_\varepsilon$ through Lemma \ref{lem:forcing}.

\begin{lemma}\label{lem:Rlinf}
Fix $0<\delta<1/4$. Let $h=h(\xi)$ and let $f=\mathcal{F}^{-1}h$.
Then for all $t\ge1$,
\begin{equation}\label{eq:Rlinf}
\|R[h](t)\|_{\Linfty_\xi}\le C t^{-1-\delta}\Big(\|f\|_{\Ltwo_x}+\|xf\|_{\Ltwo_x}\Big)^3
= C t^{-1-\delta}\|h\|_{\Hone_\xi}^3.
\end{equation}
\end{lemma}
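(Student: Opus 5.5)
The plan follows Kato--Pusateri: interpolate $|e^{-\ii a}-1|\le 2^{1-\delta}|a|^\delta$ with $a=\eta'\sigma'/t$, and pay for the weight $|\eta'\sigma'|^\delta$ using the product structure of $\mathcal F^{-1}_{\eta,\sigma}[F_h]$. Inserting this bound directly into \eqref{eq:Rdef} gives
\begin{equation}
|R[h](t,\xi)|\le C t^{-1-\delta}\iint |\eta'|^\delta|\sigma'|^\delta\,\big|\mathcal F^{-1}_{\eta,\sigma}[F_h](\eta',\sigma',\xi)\big|\,\deta'\,\dsig',
\end{equation}
so it suffices to bound the weighted $L^1_{\eta',\sigma'}$ norm on the right, uniformly in $\xi$, by $\|h\|_{H^1_\xi}^3$.

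Next I compute $\mathcal F^{-1}_{\eta,\sigma}[F_h]$ explicitly. Each factor of $F_h$ is a translate of $h$ or $\bar h$ in $\xi$, so the inverse transform in $(\eta,\sigma)$ produces products of $f$ and $\bar f$ evaluated at linear combinations of $(\eta',\sigma')$, with $\xi$-dependent unimodular phases. I do this by rewriting $h(\xi+\eta)=\int e^{\ii x(\xi+\eta)}f(x)\,dx$ (up to constants) and similarly for the other factors, and then integrating out $\eta,\sigma$, which produces delta functions. The result should have the shape
\begin{equation}
\big|\mathcal F^{-1}_{\eta,\sigma}[F_h](\eta',\sigma',\xi)\big|\le C\int_{\R}|f(x)|\,|f(x+\eta')|\,|f(x-\sigma')|\,dx,
\end{equation}
with signs and shifts that may differ. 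The phase $e^{\ii x\xi}$ disappears in absolute value, which is what makes the bound uniform in $\xi$.

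It then remains to estimate
\begin{equation}
I:=\iiint |\eta'|^\delta|\sigma'|^\delta\,|f(x)|\,|f(x+\eta')|\,|f(x-\sigma')|\,dx\,\deta'\,\dsig'.
\end{equation}
I will use $|\eta'|^\delta\lesssim\langle x\rangle^\delta+\langle x+\eta'\rangle^\delta$, and the analogous bound for $\sigma'$, to move the weights onto the three copies of $f$. This reduces $I$ to a sum of terms of the form $\int\langle x\rangle^{a}|f(x)|\,dx\cdot\int\langle y\rangle^{b}|f(y)|\,dy\cdot\int\langle z\rangle^{c}|f(z)|\,dz$ with $a+b+c=2\delta$ and each exponent at most $2\delta$. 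By Cauchy--Schwarz, $\int\langle x\rangle^{\gamma}|f|\,dx\le\|\langle x\rangle^{\gamma-1}\|_{L^2}\,\|\langle x\rangle f\|_{L^2}$, and this is finite when $\gamma<1/2$. Since $2\delta<1/2$, every term is bounded by $C(\|f\|_{L^2}+\|xf\|_{L^2})^3$. The identity $\|f\|_{L^2}+\|xf\|_{L^2}\simeq\|h\|_{H^1_\xi}$ then follows from Plancherel, because $\widehat{xf}=\ii\partial_\xi h$.

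The step I expect to be the main obstacle is the bookkeeping in computing $\mathcal F^{-1}_{\eta,\sigma}[F_h]$: getting the correct linear changes of variables, and the Jacobians, so that the structure really is a single $x$-integral with three translated copies of $f$. If the resulting shifts make the distribution of weights awkward, I would first change variables so that $\eta'$ and $\sigma'$ appear as differences between the arguments of $f$. The argument only ever uses the bound $|\eta'|\le|x|+|x+\eta'|$, which holds for any such difference. The constraint $\delta<1/4$ enters only through the Cauchy--Schwarz step, where it ensures that the weight $\langle x\rangle^{2\delta}$ is integrable against $\langle x\rangle^{-1}$ in $L^2$.
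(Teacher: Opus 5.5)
Your proposal is correct and follows essentially the same route as the paper. Both arguments use the interpolation bound $|e^{-\ii z}-1|\lesssim|z|^\delta$. Both use the physical-space identity that turns $\mathcal F^{-1}_{\eta,\sigma}[F_h]$ into a single $x$-integral of three translated copies of $f$. Both move $|\eta'|^\delta|\sigma'|^\delta$ onto the profiles via the triangle inequality, and both control the resulting weighted $L^1$ moments (each of order at most $2\delta<1/2$) by $\|f\|_{L^2}+\|xf\|_{L^2}$ before applying Plancherel.

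The bookkeeping you were worried about is harmless: a direct computation shows the modulus of the kernel is, up to a constant, bounded by exactly your guessed expression $\int|f(x)|\,|f(x+\eta')|\,|f(x-\sigma')|\,dx$.
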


\begin{proof}
Start from \eqref{eq:Rdef}. For $0<\delta<1$,
 $
|e^{-\ii z}-1|\le C_\delta |z|^\delta.
$
With $z=\eta'\sigma'/t$ we obtain
$$
|e^{-\ii\eta'\sigma'/t}-1|\le C_\delta t^{-\delta}|\eta'|^\delta|\sigma'|^\delta.
$$
Therefore
\begin{equation}\label{eq:Rlinf1}
|R[h](t,\xi)|
\le C\,t^{-1-\delta}\iint |\eta'|^\delta|\sigma'|^\delta
\Big|\mathcal F^{-1}_{\eta,\sigma}[F_h](t,\eta',\sigma',\xi)\Big|\,\deta'\,\dsig'.
\end{equation}
A direct Fourier inversion in $(\eta,\sigma)$ gives the physical-kernel identity
\begin{equation}\label{eq:physkernel}
\mathcal F^{-1}_{\eta,\sigma}[F_h](t,\eta',\sigma',\xi)
=(2\pi)^{1/2}e^{\ii\xi(\eta'+\sigma')}\int_\R e^{-\ii x\xi}\,
f(x-\eta')\,f(x)\,\overline{f(x-\sigma')}\,\dx.
\end{equation}
Taking absolute values and using $|e^{\ii\xi(\eta'+\sigma')}|=1$ yields
$$
\Big|\mathcal F^{-1}_{\eta,\sigma}[F_h](t,\eta',\sigma',\xi)\Big|
\le C\int_\R |f(x-\eta')|\,|f(x)|\,|f(x-\sigma')|\,\dx.
$$
Plugging this into \eqref{eq:Rlinf1} gives
\begin{equation}\label{eq:Rlinf2}
|R[h](t,\xi)|
\le C\,t^{-1-\delta}\iiint |\eta'|^\delta|\sigma'|^\delta
|f(x-\eta')|\,|f(x)|\,|f(x-\sigma')|\,\dx\deta'\dsig'.
\end{equation}
We now estimate the $\eta'$ and $\sigma'$ integrals pointwise in $x$.
Use the elementary inequality
$$
|\eta'|^\delta \le C\Big(|x-\eta'|^\delta+|x|^\delta\Big).
$$
Changing variables $y=x-\eta'$ yields
\begin{align*}
\int_\R |\eta'|^\delta |f(x-\eta')|\,d\eta'
&\le C\int_\R |x-\eta'|^\delta |f(x-\eta')|\,d\eta' + C|x|^\delta\int_\R|f(x-\eta')|\,d\eta'\\
&= C\int_\R |y|^\delta |f(y)|\,dy + C|x|^\delta\int_\R |f(y)|\,dy\\
&= C\||y|^\delta f\|_{\Lone_y}+C|x|^\delta\|f\|_{\Lone_y}.
\end{align*}
The same bound holds for the $\sigma'$-integral. Hence \eqref{eq:Rlinf2} implies
$$
|R[h](t,\xi)|
\le C\,t^{-1-\delta}\int_\R |f(x)|
\Big(\||y|^\delta f\|_{\Lone_y}+|x|^\delta\|f\|_{\Lone_y}\Big)^2\,dx.
$$
Expanding the square yields terms involving
$$
\|f\|_{\Lone_x},\qquad \||x|^\delta f\|_{\Lone_x},\qquad \||x|^{2\delta} f\|_{\Lone_x}.
$$
Since $0<\delta<1/4$,
%
\begin{equation}\label{eq:L1moment}
\|f\|_{\Lone_x}+ \||x|^\delta f\|_{\Lone_x}+\||x|^{2\delta} f\|_{\Lone_x}
\lesssim \|f\|_{\Ltwo_x}+\|xf\|_{\Ltwo_x}.
\end{equation}
Combining these bounds gives
$$
\|R[h](t)\|_{\Linfty_\xi}\le C\,t^{-1-\delta}\Big(\|f\|_{\Ltwo_x}+\|xf\|_{\Ltwo_x}\Big)^3.
$$
Finally, by Plancherel with our Fourier convention,
$$
\|f\|_{\Ltwo_x}=(2\pi)^{-1/2}\|h\|_{\Ltwo_\xi},
\qquad
\|xf\|_{\Ltwo_x}=(2\pi)^{-1/2}\|\partial_\xi h\|_{\Ltwo_\xi},
$$
so \eqref{eq:Rlinf} follows.
\end{proof}

\begin{lemma}\label{lem:dR}
Fix $0<\delta<1/4$. Let $h=h(\xi)$ and $f=\mathcal{F}^{-1}h$.
Then for all $t\ge1$,
\begin{equation}\label{eq:dR}
\|\partial_\xi R[h](t)\|_{\Ltwo_\xi}\le C\,t^{-1-\delta}\,\|h\|_{\Htwo_\xi}^3.
\end{equation}
\end{lemma}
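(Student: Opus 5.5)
Differentiating \eqref{eq:Rdef} in $\xi$ falls only on $F_h$, and since $F_h(\eta,\sigma,\xi)=h(\xi-\sigma)h(\xi+\eta)\overline{h(\xi+\eta-\sigma)}$, the Leibniz rule gives
$$\partial_\xi F_h = F_{h',h,h}+F_{h,h',h}+F_{h,h,h'},$$
a sum of three trilinear expressions of the same form with one factor replaced by $\partial_\xi h$. Consequently
$$\partial_\xi R[h]=R(h',h,h)+R(h,h',h)+R(h,h,h'),$$
where $R(h_1,h_2,h_3)$ denotes the trilinear version of \eqref{eq:Rdef}. The physical-kernel identity \eqref{eq:physkernel} still holds with $f$ replaced by the three inverse transforms $f_j=\mathcal F^{-1}h_j$; for the differentiated slot, $\mathcal F^{-1}(\partial_\xi h)=-\ii x f$. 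The task is therefore to prove an $L^2_\xi$ (rather than $L^\infty_\xi$) bound for a trilinear remainder in which one entry carries $xf$.

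The plan is to write the remainder as an $\eta',\sigma'$-average of Fourier transforms. By \eqref{eq:physkernel}, for fixed $(\eta',\sigma')$ the kernel equals $e^{\ii\xi(\eta'+\sigma')}$ times the Fourier transform in $x$ of $G_{\eta',\sigma'}(x)=f_1(x-\eta')f_2(x)\overline{f_3(x-\sigma')}$ (up to the placement of the derivative). Minkowski's inequality, Plancherel, and $|e^{-\ii\eta'\sigma'/t}-1|\le C t^{-\delta}|\eta'|^\delta|\sigma'|^\delta$ then give
$$\|R(h_1,h_2,h_3)\|_{L^2_\xi}\lesssim t^{-1-\delta}\iint |\eta'|^\delta|\sigma'|^\delta\,\|f_1(\cdot-\eta')f_2\,f_3(\cdot-\sigma')\|_{L^2_x}\,d\eta' d\sigma'.$$
Next, split $|\eta'|^\delta\lesssim |x-\eta'|^\delta+|x|^\delta$ and likewise for $\sigma'$. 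Put the entry containing $xf$ in $L^2$ and the other two in weighted $L^1$ and $L^\infty$ norms. This uses the pointwise inequality $\|\,|a|\,|b|\,|c|\,\|_{L^2_x}\le \|a\|_{L^\infty}\|b\|_{L^\infty}\|c\|_{L^2}$ after integrating in $\eta',\sigma'$. A cleaner route is to estimate $\iint |\eta'|^\delta|\sigma'|^\delta |f_1(x-\eta')||f_3(x-\sigma')|\,d\eta'd\sigma'$ pointwise by $(\||y|^\delta f_1\|_{L^1}+|x|^\delta\|f_1\|_{L^1})(\ldots)$ as in Lemma~\ref{lem:Rlinf}, leaving an $L^2_x$ norm of $\langle x\rangle^{2\delta}|f_2|$.

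The expected obstacle is deciding which slot absorbs the weights. When the derivative sits in the middle slot ($f_2\to xf$), the leftover factor is $\langle x\rangle^{1+2\delta}f$ in $L^2$. This is controlled by $\|\langle x\rangle^2 f\|_{L^2}\lesssim\|h\|_{H^2_\xi}$, since $2\delta<1/2$. The required $L^1$ moments $\|\langle y\rangle^{\delta}f\|_{L^1}$ follow from \eqref{eq:L1moment}. When the derivative sits in an outer slot, say $f_1\to xf$, one needs $\|\langle y\rangle^{\delta}\,y f\|_{L^1}$ and $\|yf\|_{L^1}$, which by Cauchy--Schwarz are bounded by $\|\langle y\rangle^{2}f\|_{L^2}\lesssim\|h\|_{H^2_\xi}$ because $1+\delta+1/2<2$ up to the $\langle y\rangle^{-1/2-}$ integrability factor. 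The middle factor then needs only $\|\langle x\rangle^{2\delta} f\|_{L^2}\lesssim \|h\|_{H^1_\xi}$. Collecting all terms gives $\|\partial_\xi R[h](t)\|_{L^2_\xi}\le Ct^{-1-\delta}\|h\|_{H^2_\xi}^3$, which is \eqref{eq:dR}; the only real care needed is checking these weight exponents.
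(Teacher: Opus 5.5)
Your reduction $\partial_\xi R[h]=R(h',h,h)+R(h,h',h)+R(h,h,h')$ is correct. Indeed $\partial_\xi$ commutes with $\mathcal F^{-1}_{\eta,\sigma}$, so no separate $\ii(\eta+\sigma)$ term as in \eqref{eq:dF-inv} is needed. The Minkowski--Plancherel bound is also correct:
\[
\|R(h_1,h_2,h_3)\|_{L^2_\xi}\lesssim t^{-1-\delta}\iint|\eta'|^\delta|\sigma'|^\delta\,\|G_{\eta',\sigma'}\|_{L^2_x}\,d\eta'\,d\sigma',\qquad G_{\eta',\sigma'}=f_1(\cdot-\eta')\,f_2\,\overline{f_3(\cdot-\sigma')}.
\]
The gap is the next step. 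Your ``cleaner route'' bounds $\iint|\eta'|^\delta|\sigma'|^\delta|f_1(x-\eta')||f_3(x-\sigma')|\,d\eta'd\sigma'$ pointwise in $x$ and then takes the $L^2_x$ norm against $f_2$. That controls $\big\|\iint|\eta'|^\delta|\sigma'|^\delta|G_{\eta',\sigma'}|\,d\eta'd\sigma'\big\|_{L^2_x}$. By Minkowski's inequality this is \emph{smaller} than the right-hand side above, so the estimate runs the wrong way. The device of Lemma~\ref{lem:Rlinf} works only because there the inner norm is $L^1_x$ and Tonelli lets you exchange integrals.

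In fact the bound your chain asserts for that right-hand side, $\iint|\eta'|^\delta|\sigma'|^\delta\|G_{\eta',\sigma'}\|_{L^2_x}\lesssim\|\langle x\rangle^\delta f_1\|_{L^1}\|\langle x\rangle^\delta f_3\|_{L^1}\|\langle x\rangle^{2\delta}f_2\|_{L^2}$, is false. Take $f_1=f_3=|x|^{-3/4}\mathbf 1_{\{|x|<1\}}$ and $f_2=\mathbf 1_{[0,1]}$. The right side is finite, while $\|G_{\eta',\sigma'}\|_{L^2_x}=\infty$ whenever $0<\eta'<1$ and $|\eta'-\sigma'|<1/2$. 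Your other variant, placing $f_1(\cdot-\eta')$ and $f_3(\cdot-\sigma')$ in $L^\infty_x$, is unavailable. The norm $\|h\|_{H^2_\xi}$ controls $\langle x\rangle^2 f$ in $L^2_x$, not $f$ in $L^\infty_x$, and an $L^\infty$ bound would still give no decay in $(\eta',\sigma')$.

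The fix is not to apply Minkowski at all. Absorb the unimodular factor by a translation, $e^{\ii\xi(\eta'+\sigma')}\widehat{G_{\eta',\sigma'}}(\xi)=\widehat{G_{\eta',\sigma'}(\cdot+\eta'+\sigma')}(\xi)$. Then $R(h_1,h_2,h_3)$ is a constant multiple of $t^{-1}\widehat H$, where
\[
H(z)=\iint\big(e^{-\ii\eta'\sigma'/t}-1\big)\,f_1(z+\sigma')\,f_2(z+\eta'+\sigma')\,\overline{f_3(z+\eta')}\,d\eta'\,d\sigma'.
\]
Now Plancherel puts $L^2_z$ outside, and bounding $|H(z)|$ by the integral of absolute values is legitimate. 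Since every factor is now translated, replace ``leaving $\|\langle x\rangle^{2\delta}f_2\|_{L^2}$'' by duality. Set $a=z+\sigma'$, $b=z+\eta'+\sigma'$, $c=z+\eta'$ (unit Jacobian, $z=a-b+c$). Then $|\eta'|^\delta|\sigma'|^\delta\lesssim\langle a\rangle^\delta\langle b\rangle^{2\delta}\langle c\rangle^\delta$, and
\[
\iiint|\phi(a-b+c)|\,F_1(a)F_2(b)F_3(c)\,da\,db\,dc\le\|\phi\|_{L^2}\|F_j\|_{L^2}\prod_{k\neq j}\|F_k\|_{L^1}\quad\text{for any } j.
\]
Put the slot carrying $xf$ in $L^2$ and the other two in $\langle x\rangle^{2\delta}$-weighted $L^1$, which is bounded by $\|\langle x\rangle f\|_{L^2}$ because $\delta<1/4$. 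This yields $\|\partial_\xi R[h](t)\|_{L^2_\xi}\lesssim t^{-1-\delta}\|h\|_{H^{1+2\delta}_\xi}\|h\|_{H^1_\xi}^2$. So your weight bookkeeping survives, and your target bound for $\|R\|_{L^2_\xi}$ is true, just not via the Minkowski majorant.

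The paper handles the $(\eta',\sigma')$-integral differently, by Cauchy--Schwarz against $w_\delta\in L^2(\R^2)$ and Plancherel in $(\eta,\sigma)$. That also respects the order of norms. However, in the worst configuration it charges both weights $\langle\eta'\rangle^{1/2+2\delta}$ and $\langle\sigma'\rangle^{1/2+2\delta}$ to the unshifted factor. When $\partial_\xi$ lands there, it then needs $\langle x\rangle^{2+4\delta}f\in L^2_x$, which is more than $H^2_\xi$; the duality argument avoids that loss.
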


\begin{proof}
Differentiate \eqref{eq:Rdef} under the integral:
\begin{equation}\label{eq:dR1}
\partial_\xi R[h](t,\xi)
= \frac{1}{2\pi t}\iint (e^{-\ii\eta'\sigma'/t}-1)\,
\partial_\xi\mathcal F^{-1}_{\eta,\sigma}[F_h](t,\eta',\sigma',\xi)\,\deta'\dsig'.
\end{equation}
From the inverse Fourier formula, one has
\begin{equation}\label{eq:dF-inv}
\partial_\xi \mathcal F^{-1}_{\eta,\sigma}[F_h](t,\eta,\sigma,\xi)
= i(\eta+\sigma)\,\mathcal F^{-1}_{\eta,\sigma}[F_h](t,\eta,\sigma,\xi)
+ \mathcal F^{-1}_{\eta,\sigma}[\partial_\xi F_h](t,\eta,\sigma,\xi),
\end{equation}
because $\partial_\xi$ hits both the unimodular factor $e^{i\xi(\eta+\sigma)}$ and the $\xi$-dependence inside $F_h$.
Hence
\begin{align}
\partial_\xi R[h](t,\xi)
&= \frac{1}{2 \pi t}\iint_{\mathbb R^2} \big(e^{-i\eta\sigma/t}-1\big)\,
\Big(i(\eta+\sigma)\,\mathcal F^{-1}_{\eta,\sigma}[F_h]
+ \mathcal F^{-1}_{\eta,\sigma}[\partial_\xi F_h]\Big)(t,\eta,\sigma,\xi)\,d\eta\,d\sigma.
\end{align}
So it suffices to estimate the two pieces:
\begin{align}
\mathcal I_1
&:= \frac{1}{2\pi t}\iint_{\mathbb R^2} \big(e^{-i\eta\sigma/t}-1\big)\,
(\eta+\sigma)\,\mathcal F^{-1}_{\eta,\sigma}[F_h](t,\eta,\sigma,\xi)\,d\eta\,d\sigma, \\
\mathcal I_2
&:= \frac{1}{2\pi t}\iint_{\mathbb R^2} \big(e^{-i\eta\sigma/t}-1\big)\,
\mathcal F^{-1}_{\eta,\sigma}[\partial_\xi F_h](t,\eta,\sigma,\xi)\,d\eta\,d\sigma.
\end{align}
Using $
\big|e^{-i\eta\sigma/t}-1\big|
\le t^{-\delta}
\Big(|\eta|^\delta\langle\eta\rangle^{-\frac{1}{2}-2\delta}\Big)
\Big(|\sigma|^\delta\langle\sigma\rangle^{-\frac{1}{2}-2\delta}\Big)
\langle\eta\rangle^{\frac{1}{2}+2\delta}\langle\sigma\rangle^{\frac{1}{2}+2\delta}$,  for any $\Ltwo_\xi$-valued function $G(\eta,\sigma,\xi)$,
\begin{equation}\label{eq:CS-etasigma}
\Big\|
\iint (e^{-i\eta\sigma/t}-1)\,G(\eta,\sigma,\cdot)\,d\eta\,d\sigma
\Big\|_{\Ltwo_\xi}
\;\le\;
t^{-\delta}\,\|w_\delta\|_{\Ltwo_{\eta,\sigma}}\;
\big\|\langle\eta\rangle^{\frac{1}{2}+2\delta}\langle\sigma\rangle^{\frac{1}{2}+2\delta}G\big\|_{\Ltwo_{\eta,\sigma}\Ltwo_\xi},
\end{equation}
where
$
w_\delta(\eta,\sigma)
:= \Big(|\eta|^\delta\langle\eta\rangle^{-\frac{1}{2}-2\delta}\Big)
   \Big(|\sigma|^\delta\langle\sigma\rangle^{-\frac{1}{2}-2\delta}\Big)
\in \Ltwo(\R^2).
$
Applying \eqref{eq:CS-etasigma} with
$G=(\eta+\sigma)\mathcal F^{-1}_{\eta,\sigma}[F_h]$
and with
$G=\mathcal F^{-1}_{\eta,\sigma}[\partial_\xi F_h]$,
we get
\begin{align}
\|\mathcal I_1(t)\|_{\Ltwo_\xi}
&\lesssim
t^{-1-\delta}
\big\|\langle\eta\rangle^{\frac{1}{2}+2\delta}\langle\sigma\rangle^{\frac{1}{2}+2\delta}
(\eta+\sigma)
\mathcal F^{-1}_{\eta,\sigma}[F_h]\big\|_{\Ltwo_{\eta,\sigma}\Ltwo_\xi}, \\
\|\mathcal I_2(t)\|_{\Ltwo_\xi}
&\lesssim
t^{-1-\delta}
\big\|\langle\eta\rangle^{\frac{1}{2}+2\delta}\langle\sigma\rangle^{\frac{1}{2}+2\delta}
\mathcal F^{-1}_{\eta,\sigma}[\partial_\xi F_h]
\big\|_{\Ltwo_{\eta,\sigma}\Ltwo_\xi}.
\end{align}
By Plancherel,
\begin{align}
\big\|\langle\eta\rangle^{\frac{1}{2}+2\delta}\langle\sigma\rangle^{\frac{1}{2}+2\delta}
(\eta+\sigma)
\mathcal F^{-1}_{\eta,\sigma}[F_h]\big\|_{\Ltwo_{\eta,\sigma}\Ltwo_\xi}+\big\|\langle\eta\rangle^{\frac{1}{2}+2\delta}\langle\sigma\rangle^{\frac{1}{2}+2\delta}
\mathcal F^{-1}_{\eta,\sigma}[\partial_\xi F_h]
\big\|_{\Ltwo_{\eta,\sigma}\Ltwo_\xi} \lesssim \|h\|_{H^{1+\frac{1}{2}+2\delta}_\xi}^3.
\end{align}
Therefore putting computations above together,
\[
\|\partial_\xi R[h](t)\|_{\Ltwo_\xi}
\lesssim
t^{-1-\delta}
\|h\|_{H^{2}_\xi}^3
\]
as claimed.\footnote{Here we do not really need $\delta$. Any positive number  is enough to ensure the integrablity in $t$. This corresponds to our earlier remark that it sufficient to take $W\in H^{3/2+}$.}
\end{proof}

Applying bounds above to the forcing term, we conclude the following:
\begin{proposition}\label{prop:forcing-bounds}
Let $v$ be defined by \eqref{eq:vdef}. Then for $t\ge2$,
\begin{equation}\label{eq:forcing-bounds}
\|R[v](t)\|_{\Linfty_\xi}+\|R[v](t)\|_{L^2_\xi}+\|\partial_\xi R[v](t)\|_{\Ltwo_\xi}
\le C t^{-1-\delta}\|v(t)\|_{\Htwo_\xi}^3
\le C t^{-1-\delta}\varepsilon_0^3(1+\log t)^6.
\end{equation}
Consequently, by Lemma \ref{lem:forcing},
$$
\|\reallywidehat{U(-t)\varepsilon}(t)\|_{\Linfty_\xi}+\|\reallywidehat{U(-t)\varepsilon}(t)\|_{L^2_\xi}+\|\partial_\xi\reallywidehat{U(-t)\varepsilon}(t)\|_{\Ltwo_\xi}
\le C t^{-1-\delta}\varepsilon_0^3(1+\log t)^6.
$$
\end{proposition}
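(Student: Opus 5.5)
The plan is to combine Lemma \ref{lem:Rlinf} and Lemma \ref{lem:dR} with $h=v(t)$. This reduces everything to two tasks: an $L^2_\xi$ bound for $R[v]$, and a bound on $\|v(t)\|_{\Htwo_\xi}$ by $\varepsilon_0(1+\log t)^2$. The consequence then follows from Lemma \ref{lem:forcing}, since $\reallywidehat{U(-t)\varepsilon}=-\ii\la R[v]$ pointwise in $\xi$ and all three norms are invariant under multiplication by a unimodular constant.

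For the $\Linfty_\xi$ piece, Lemma \ref{lem:Rlinf} gives $C t^{-1-\delta}\|v\|_{\Hone_\xi}^3\le Ct^{-1-\delta}\|v\|_{\Htwo_\xi}^3$ directly. For the $\Ltwo_\xi$ piece, I would rerun the proof of Lemma \ref{lem:dR} without the derivative. Concretely, apply \eqref{eq:CS-etasigma} with $G=\mathcal F^{-1}_{\eta,\sigma}[F_h]$. Plancherel in $(\eta,\sigma)$ then converts the weights $\langle\eta\rangle^{1/2+2\delta}\langle\sigma\rangle^{1/2+2\delta}$ into fractional $\xi$-derivatives on the factors of $F_h$. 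These are bounded by $\|h\|_{H^{1/2+2\delta}_\xi}^2\|h\|_{L^2}$, and hence by $\|h\|_{\Htwo_\xi}^3$. An alternative is to estimate $\|R[h]\|_{L^2_\xi}$ through $\|\mathcal F^{-1}_\xi R[h]\|_{L^2_x}$ using the physical-kernel identity \eqref{eq:physkernel}, but the Cauchy–Schwarz route reuses the existing machinery. The derivative piece is exactly Lemma \ref{lem:dR}.

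It remains to bound $\|v(t)\|_{\Htwo_\xi}$ for $t\ge2$. Write $v=W e^{-\ii\la|W|^2\log t}$. Then $\partial_\xi v = (\partial_\xi W - \ii\la\log t\,\partial_\xi(|W|^2)W)e^{-\ii\la|W|^2\log t}$. Differentiating once more produces terms of the following types:
\begin{itemize}
\item $\partial_\xi^2W$;
\item $\log t\,(\partial_\xi W)^2\,\overline{W}$-type products and $\log t\,|W|\,|\partial_\xi^2W|\,|W|$;
\item $(\log t)^2\,|W|^2|\partial_\xi W|^2|W|$.
\end{itemize}
Each product is controlled in $L^2$ using $\|W\|_{\Linfty}\le\varepsilon_0$ and the one-dimensional embedding $\|\partial_\xi W\|_{\Linfty}\lesssim\|W\|_{\Htwo}\le\varepsilon_0$. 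This gives
\[\|v(t)\|_{\Htwo_\xi}\le C\big(\varepsilon_0+\varepsilon_0^3\log t+\varepsilon_0^5(\log t)^2\big)\le C\varepsilon_0(1+\log t)^2,\]
using $\varepsilon_0\le1$. Cubing yields the factor $\varepsilon_0^3(1+\log t)^6$.

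The main obstacle is the $L^2_\xi$ estimate, since neither preceding lemma states it. One has to check that Plancherel in $(\eta,\sigma)$ for $F_h(\eta,\sigma,\xi)=h(\xi-\sigma)h(\xi+\eta)\overline{h(\xi+\eta-\sigma)}$ really bounds the weighted norm by products of Sobolev norms of $h$. I expect this to work after the change of variables $(\eta,\sigma,\xi)$ to the three arguments of $h$, which has unit Jacobian. The weights $\langle\eta\rangle^{a}\langle\sigma\rangle^{a}$ then distribute as fractional derivatives with $a<1$, so Leibniz plus Sobolev embedding closes.
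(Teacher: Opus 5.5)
Your proposal is correct and follows the paper's proof. Like the paper, you apply Lemmas~\ref{lem:Rlinf} and~\ref{lem:dR} with $h=v(t)$, use the chain-rule bound $\|v(t)\|_{\Htwo_\xi}\le C\varepsilon_0(1+\log t)^2$, and conclude with Lemma~\ref{lem:forcing}. You also usefully supply the $L^2_\xi$ bound on $R[v]$ via \eqref{eq:CS-etasigma} with $G=\mathcal F^{-1}_{\eta,\sigma}[F_h]$, which the paper's one-line proof leaves implicit because neither lemma states it.

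One small correction to that step: the conjugated factor $\overline{h(\xi+\eta-\sigma)}$ depends on both $\eta$ and $\sigma$, so both weights can fall on it. The intermediate bound is therefore $\|h\|_{H^{1/2+2\delta}_\xi}^2\|h\|_{H^{1+4\delta}_\xi}$ rather than $\|h\|_{H^{1/2+2\delta}_\xi}^2\|h\|_{\Ltwo_\xi}$, and not every factor sees order $<1$. This is still $\lesssim\|h\|_{\Htwo_\xi}^3$ since $1+4\delta<2$.
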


\begin{proof}
Apply Lemma \ref{lem:Rlinf} and Lemma \ref{lem:dR} with $h=v(t,\cdot)$.
Since $v(t,\xi)=W(\xi)e^{-\ii\la|W(\xi)|^2\log t}$, repeated product/chain-rule bounds show
$$
\|v(t)\|_{\Htwo_\xi}\le C\|W\|_{\Htwo_\xi}(1+\log t)^2\leq C\varepsilon_0(1+\log t)^2.
$$
Insert into \eqref{eq:Rlinf} and \eqref{eq:dR}.
\end{proof}


\begin{proposition}\label{prop:Phi-eps}
Let $\Phi_\varepsilon$ be defined by
$$
\Phi_\varepsilon(t)=-\ii\int_t^\infty U(-s)\varepsilon(s)\,ds,
$$
where $\varepsilon$ is given by \eqref{eq:eps-def}.
Then for $T\ge2$,
\begin{equation}\label{eq:Phi-eps}
\|\Phi_\varepsilon\|_{X_T}\le C\varepsilon_0^3\,T^{\frac{\alpha-\delta}{2}}.
\end{equation}
\end{proposition}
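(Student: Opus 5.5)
The plan is to pass everything to the Fourier side and then integrate the pointwise-in-time bounds of Proposition~\ref{prop:forcing-bounds} in time. Since $\Phi_\varepsilon$ does not depend on $g$, this is a purely linear estimate.

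First, I will take the Fourier transform of $\Phi_\varepsilon(t)$. Because the integral in $s$ converges absolutely in each of the relevant norms (checked below), we may interchange, and Lemma~\ref{lem:forcing} gives
\begin{equation}
\wh{\Phi_\varepsilon}(t,\xi)=-\ii\int_t^\infty \reallywidehat{U(-s)\varepsilon}(s,\xi)\,ds=-\lambda\int_t^\infty R[v](s,\xi)\,ds .
\end{equation}
Minkowski's inequality for each of the three norms $\Linfty_\xi$, $\Ltwo_\xi$ and $\partial_\xi$ in $\Ltwo_\xi$ then yields
\begin{equation}
\|\wh{\Phi_\varepsilon}(t)\|_{\Linfty_\xi}+\|\wh{\Phi_\varepsilon}(t)\|_{\Ltwo_\xi}+\|\partial_\xi\wh{\Phi_\varepsilon}(t)\|_{\Ltwo_\xi}
\le C\varepsilon_0^3\int_t^\infty s^{-1-\delta}(1+\log s)^6\,ds .
\end{equation}
To differentiate under the integral in $\xi$, I use that $\partial_\xi R[v](s)$ is integrable in $s$ with values in $\Ltwo_\xi$, which is exactly what Proposition~\ref{prop:forcing-bounds} supplies.

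Next I must absorb the logarithmic losses. For any $\beta>0$ we have $(1+\log s)^6\le C_\beta s^{\beta}$. I choose $\beta=(\delta-\alpha)/2>0$, which is admissible by \eqref{eq:params}. Then
\begin{equation}
\int_t^\infty s^{-1-\delta+\beta}\,ds=\frac{t^{-\delta+\beta}}{\delta-\beta}=C\,t^{-\frac{\delta+\alpha}{2}}.
\end{equation}
Multiplying by $t^\alpha$, and using $(1+\log t)^{-1}\le 1$ for the derivative term, gives a contribution of at most $C\varepsilon_0^3 t^{\alpha-\frac{\delta+\alpha}{2}}=C\varepsilon_0^3 t^{\frac{\alpha-\delta}{2}}$. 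This is a decreasing function of $t$, so the supremum over $t\ge T$ is attained at $t=T$, and this yields \eqref{eq:Phi-eps}.

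The only genuine input is Proposition~\ref{prop:forcing-bounds}, together with the identification of $\reallywidehat{U(-s)\varepsilon}$ with $-\ii\lambda R[v]$. The one point requiring care is the choice of how much decay to sacrifice to absorb $(1+\log s)^6$. Splitting the gap $\delta-\alpha$ evenly is what produces the exponent $\frac{\alpha-\delta}{2}$. The resulting constant depends on $\alpha,\delta$, which are fixed parameters, so this is acceptable.
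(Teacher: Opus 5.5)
Your proposal is correct and follows the paper's proof. Both arguments identify $\wh{\Phi_\varepsilon}$ with a time integral of $\reallywidehat{U(-s)\varepsilon}=-\ii\lambda R[v]$, apply Minkowski together with the bounds of Proposition~\ref{prop:forcing-bounds}, and absorb the $(1+\log s)^6$ loss; your choice $\beta=(\delta-\alpha)/2$ just spells out the step that yields $t^{\frac{\alpha-\delta}{2}}$, which the paper asserts without detail.
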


\begin{proof}
By Lemma \ref{lem:forcing} and Proposition \ref{prop:forcing-bounds},
$$
\|\reallywidehat{U(-s)\varepsilon}(s)\|_{\Linfty_\xi}+\|\reallywidehat{U(-s)\varepsilon}(s)\|_{L^2_\xi}+\|\partial_\xi\reallywidehat{U(-s)\varepsilon}(s)\|_{\Ltwo_\xi}
\le C s^{-1-\delta}\varepsilon_0^3(1+\log s)^6.
$$
By definition of $\Phi_\varepsilon$,
$$
\wh{\Phi_\varepsilon}(t)= -\ii\int_t^\infty \reallywidehat{U(-s)\varepsilon}(s)\,ds.
$$
Therefore
$$
\|\wh{\Phi_\varepsilon}(t)\|_{L^\infty_\xi\cap L^2_\xi } \le \int_t^\infty \|\reallywidehat{U(-s)\varepsilon}(s)\|_{L^\infty_\xi\cap L^2_\xi }\,ds,\qquad
\|\partial_\xi\wh{\Phi_\varepsilon}(t)\|_{L^2_\xi}\le \int_t^\infty \|\partial_\xi\reallywidehat{U(-s)\varepsilon}(s)\|_{L^2_\xi}\,ds.
$$
Multiply by $t^\alpha$ in the above to conclude
$$
t^\alpha\int_t^\infty s^{-1-\delta}(1+\log s)^6\,ds\le C t^{\frac{\alpha-\delta}{2}}.
$$
Taking $\sup_{t\ge T}$ of the expression above gives \eqref{eq:Phi-eps}, noting
that $\alpha < \delta$.
\end{proof}


\section{Nonlinear estimates for the fixed point map}\label{sec:nonlinear}
In this section, we study  {\it a priori} estimates and difference estimates for $\Phi_{\mathrm{nl}}$ defined in \eqref{eq:Phi}.  Then we  prove that $\Phi$ is a contraction on a small ball in $X_T$
for $\varepsilon_0$ small and $T$ large.

We start with basic bounds for $g$ and $w=U(t)g$ from the $X_T$ norm. 

\begin{lemma}\label{lem:gL2}
If $g\in X_T$, then for all $t\ge T$,
\begin{equation}\label{eq:wLinf}
\|w(t)\|_{\Ltwo_x}\le C t^{-\alpha}\|g\|_{X_T},\qquad
\|w(t)\|_{\Linfty_x}\le C t^{-1/2-\alpha}\|g\|_{X_T}.
\end{equation}
\end{lemma}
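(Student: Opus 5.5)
The plan is to read both bounds directly off the definition \eqref{eq:XTdef} of the $X_T$ norm, using the unitarity of $U(t)$ for the $\Ltwo_x$ bound and the dispersive estimate of Lemma \ref{lem:KPdisp} for the $\Linfty_x$ bound.

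\emph{Step 1: the $\Ltwo_x$ bound.} Since $w(t)=U(t)g(t)$ and $U(t)$ is unitary on $\Ltwo_x$, we have $\|w(t)\|_{\Ltwo_x}=\|g(t)\|_{\Ltwo_x}$. By Plancherel with the paper's Fourier convention, $\|g(t)\|_{\Ltwo_x}=(2\pi)^{-1/2}\|\wh g(t)\|_{\Ltwo_\xi}$. The definition of $X_T$ gives $\|\wh g(t)\|_{\Ltwo_\xi}\le t^{-\alpha}\|g\|_{X_T}$ for every $t\ge T$, so
$$
\|w(t)\|_{\Ltwo_x}\le C t^{-\alpha}\|g\|_{X_T}.
$$

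\emph{Step 2: the $\Linfty_x$ bound.} Apply \eqref{eq:KPdisp} with $h=g(t)$:
$$
\|w(t)\|_{\Linfty_x}\le C\Big(t^{-1/2}\|\wh g(t)\|_{\Linfty_\xi}+t^{-3/4}\|\partial_\xi \wh g(t)\|_{\Ltwo_\xi}\Big).
$$
From \eqref{eq:XTdef} we have
$$
\|\wh g(t)\|_{\Linfty_\xi}\le t^{-\alpha}\|g\|_{X_T},\qquad \|\partial_\xi\wh g(t)\|_{\Ltwo_\xi}\le t^{-\alpha}(1+\log t)\|g\|_{X_T}.
$$
Substituting, the second term is $t^{-3/4-\alpha}(1+\log t)\|g\|_{X_T}$. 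Since $(1+\log t)t^{-1/4}$ is bounded uniformly for $t\ge 2$, this term is at most $C t^{-1/2-\alpha}\|g\|_{X_T}$, which gives the claimed bound.

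\emph{Hypotheses of Lemma \ref{lem:KPdisp}.} The only point needing care is that the lemma requires $\wh g(t)\in\Linfty_\xi$ and $\partial_\xi\wh g(t)\in\Ltwo_\xi$. Both hold for each $t\ge T$ precisely because $\|g\|_{X_T}<\infty$. If one wants to be fully rigorous, one can first apply the lemma to Schwartz approximants of $g(t)$ and then pass to the limit using density.

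Nothing in the argument is a genuine obstacle; the one thing to check is that the logarithmic loss permitted in the $\partial_\xi$ part of the norm is absorbed by the extra $t^{-1/4}$ decay supplied by Lemma \ref{lem:KPdisp}.
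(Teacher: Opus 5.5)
Your proposal is correct and follows the paper's proof: unitarity of $U(t)$ with Plancherel gives the $L^2_x$ bound, and Lemma \ref{lem:KPdisp} applied to $h=g(t)$, combined with the definition of $\|\cdot\|_{X_T}$, gives the $L^\infty_x$ bound. You also spell out a step the paper leaves implicit, namely that the logarithmic loss allowed in the $\partial_\xi$ component is absorbed by the extra $t^{-1/4}$ decay.
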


\begin{proof}
Since $w(t)=U(t)g(t)$ and $U(t)$ is unitary on $\Ltwo$, $\|w(t)\|_{L_x^2}=\|g(t)\|_{L_x^2}$. The first inequality follows.

For the $L^\infty_x$ decay, apply Lemma \ref{lem:KPdisp} to $h=g(t)$:
$$
\|w(t)\|_{L_x^\infty}=\|U(t)g(t)\|_{L_x^\infty}\le C\Big(t^{-1/2}\|\wh g(t)\|_{L_\xi^\infty}+t^{-3/4}\|\partial_\xi\wh g(t)\|_{L_\xi^2}\Big)
\le C t^{-1/2-\alpha}\|g\|_{X_T}
$$as desired.
\end{proof}

We repeatedly use the identity, valid for any $a,b\in\C$,
\begin{equation}\label{eq:alg}
|a+b|^2(a+b)-|a|^2a
=2|a|^2b+a^2\overline{b}+2a|b|^2+\overline{a}b^2+|b|^2b.
\end{equation}
We will apply \eqref{eq:alg} with $a=u_{\mathrm{app}}$ and $b=w$.



\subsection{Estimate of $\|\wh{\Phi_{\mathrm{nl}}(g)}(t)\|_{\Linfty_\xi}$}
Fix $g\in B_M$ and write $w=U(\cdot)g$, $u=u_{\mathrm{app}}+w$.
Define the nonlinear difference
\begin{equation}\label{eq:Ndef}
\mathcal{N}(u,u_{\mathrm{app}})=|u|^2u-|u_{\mathrm{app}}|^2u_{\mathrm{app}}.
\end{equation}
Then
$$
\Phi_{\mathrm{nl}}(g)(t)=\ii\la\int_t^\infty U(-s)\mathcal{N}(u,u_{\mathrm{app}})(s)\,ds.
$$
Taking Fourier transforms gives
$$
\wh{\Phi_{\mathrm{nl}}(g)}(t,\xi)=\ii\la\int_t^\infty e^{\ii s\xi^2/2}\reallywidehat{\mathcal{N}(u,u_{\mathrm{app}})}(s,\xi)\,ds.
$$
Using \eqref{eq:alg} with $a=u_{\mathrm{app}}$ and $b=w$,
\begin{equation}\label{eq:Nexpand}
\mathcal{N}(u,u_{\mathrm{app}})=2|u_{\mathrm{app}}|^2w+u_{\mathrm{app}}^2\overline{w}
+2u_{\mathrm{app}}|w|^2+\overline{u_{\mathrm{app}}}w^2+|w|^2w.
\end{equation}
We note that
\begin{equation}
   \|g(t)\|_{H_x^{0,1}}\sim\|\wh g(t)\|_{H^1_\xi} \lesssim M t^{-\alpha} \log t, \qquad \|\varphi(t)\|_{H_x^{0,1}}\sim\|v(t)\|_{H^1_\xi} \lesssim \varepsilon_0\log t,
\end{equation}
\begin{equation}
\|\wh g(t)\|_{L^\infty_\xi} \lesssim  M t^{-\alpha}, \qquad \| v(t)\|_{L^\infty_\xi}= \|W\|_{L^\infty_\xi} \le \varepsilon_0.
\end{equation}
We focus on the analysis for $|u_{\mathrm{app}}|^2w$ and other terms can be estimated similarly.

By direct computations, using notations from Lemma \ref{lem:trilinear},
\begin{equation}
    e^{\ii s\xi^2/2}\reallywidehat{|u_{\mathrm{app}}|^2w}(s,\xi)=  \widehat{\mathcal{T}}(\varphi(s),\varphi(s),g(s))(s)= i \frac{1}{s}
|v|^2\widehat g (\xi)
+  R(\varphi(s),\varphi(s),g(s))(s,\xi)
\end{equation}
with 
\begin{equation}
    | R(\varphi(s),\varphi(s),g(s))(s,\xi)|
\lesssim
s^{-1-\delta}
\|\varphi\|_{H_x^{0,1}}^2\|g\|_{H_x^{0,1}}\lesssim \varepsilon_0^2 M s^{-1-\delta} s^{-\alpha} (\log s)^3 \lesssim \varepsilon_0^2 M s^{-1 - \alpha}.
\end{equation}
Clearly,  $\left|i \frac{1}{s}
|v|^2\widehat g (\xi)\right|\leq \varepsilon_0^2 M s^{-1 - \alpha}$. 
Therefore, with the same analysis applied to other terms, the leading order terms from $e^{\ii s\xi^2/2}\reallywidehat{\mathcal{N}(u,u_{\mathrm{app}})}(s,\xi)$, one has
$$
\|\wh{\Phi_{\mathrm{nl}}(g)}(t)\|_{L_\xi^\infty}
\le C\int_t^\infty \Big(\varepsilon_0^2 M s^{-1-\alpha}+\varepsilon_0 M^2 s^{-1-2\alpha}+M^3 s^{-1-3\alpha}\Big)\,ds.
$$
Multiply by $t^\alpha$ and take $\sup_{t\ge T}$. Since $\alpha>0$, the integrals converge and we get
\begin{equation}\label{eq:Phi-nl-Linf}
\sup_{t\ge T}t^\alpha\|\wh{\Phi_{\mathrm{nl}}(g)}(t)\|_{L_\xi^\infty}
\le C\Big(\varepsilon_0^2 M+\varepsilon_0 M^2 T^{-\alpha}+M^3T^{-2\alpha}\Big).
\end{equation}
In practice we choose   $T$ large so that the latter terms are small; the leading coefficient is $\varepsilon_0^2M$.

\subsection{Lipschitz bound in $\Linfty_\xi$}
Let $g_1,g_2\in B_M$, with associated $w_j=U(\cdot)g_j$ and $u_j=u_{\mathrm{app}}+w_j$.
We estimate
\begin{equation}
\|\wh{\Phi_{\mathrm{nl}}(g_1)}(t)-\wh{\Phi_{\mathrm{nl}}(g_2)}(t)\|_{L_\xi^\infty}
\le C\int_t^\infty \|\reallywidehat{ U(-s)\Big(\mathcal{N}(u_1,u_{\mathrm{app}})-\mathcal{N}(u_2,u_{\mathrm{app}})\Big)}\|_{L^\infty_\xi}\,ds.  
\end{equation}
Using the algebraic identity for the cubic map, one can write for $F(z)=|z|^2z$,
$$
F(u_1)-F(u_2)=Q(u_1,u_2)(u_1-u_2)
$$
for some quadratic polynomial $Q(u_1,u_2)$.

Now we can apply Lemma \ref{lem:trilinear} and the same analysis above to conclude that
\begin{equation}\label{eq:Lip-Linf-final}
\sup_{t\ge T}t^\alpha\|\wh{\Phi_{\mathrm{nl}}(g_1)}(t)-\wh{\Phi_{\mathrm{nl}}(g_2)}(t)\|_{L_\xi^\infty}
\le C\Big(\varepsilon_0^2+\varepsilon_0MT^{-\alpha}+M^2T^{-2\alpha}\Big)\|g_1-g_2\|_{X_T}.
\end{equation}
\subsection{Estimate of $\|\partial_\xi \wh{\Phi_{\mathrm{nl}}(g)}(t)\|_{L^2_\xi}$}
We control $\partial_\xi$ directly on the Fourier side,
in the same spirit as the computation in \cite{KP}. 
Write, schematically,
$$
U(-s)(|u(s)|^2u(s)) = e^{-\ii s\partial_{xx}/2}\big(|u(s)|^2u(s)\big),
\qquad
u(s)=e^{\ii s\partial_{xx}/2}f(s).
$$
Then
$$
U(-s)(|u|^2u)=e^{-\ii s\partial_{xx}/2}\Big(\big(e^{\ii s\partial_{xx}/2}f\big)\big(e^{-\ii s\partial_{xx}/2}\overline{f}\big)\big(e^{\ii s\partial_{xx}/2}f\big)\Big).
$$
Differentiate in $\xi$ on the Fourier side following \cite{KP}, one has
\begin{align}
\partial_\xi \iint e^{is\eta\sigma}\,
&\widehat f(s,\xi-\eta)\,\widehat{\overline f}(s,\xi-\sigma-\eta)\,\widehat f(s,\xi-\sigma)\,
d\eta\,d\sigma
\\&=\iint e^{is\eta\sigma}\,
\partial_\xi\widehat f(s,\xi-\eta)\,\widehat{\overline f}(s,\xi-\sigma-\eta)\,\widehat f(s,\xi-\sigma)\,
d\eta\,d\sigma  \\
&\qquad +\ \text{similar terms}.
\end{align}
where ``similar terms'' represents those terms where $\partial_\xi$ hits the Fourier transform of
the other profiles. These ``similar terms'' can be
estimated in the same way as the first one above. Then, after redistributing the
phase on the three profiles, we see that
\begin{equation}\label{eq:dxiC-bound}
\|\partial_\xi\reallywidehat{U(-s)(|u|^2u)}\|_{L^2_\xi}\le C\|u(s)\|_{L^\infty_x}^2\|\partial_\xi\wh f(s)\|_{L^2_\xi}.
\end{equation}
Note that the nonlinear term in $\Phi_{\mathrm{nl}}$ is
$$
U(-s)\Big(|u|^2u-|u_{\mathrm{app}}|^2u_{\mathrm{app}}\Big)=U(-s)\Big(2|u_{\mathrm{app}}|^2w+u_{\mathrm{app}}^2\overline{w}
+2u_{\mathrm{app}}|w|^2+\overline{u_{\mathrm{app}}}w^2+|w|^2w\Big).
$$
Again, we focus on $|u_{\mathrm{app}}|^2w$ and other terms will follow similarly. 

Applying \eqref{eq:dxiC-bound} to each cubic term and using the triangle inequality yields
\begin{align}\label{eq:dxi-nl-diff}
\|\partial_\xi \reallywidehat{U(-s)|u_{\mathrm{app}}|^2w}\|_{L^2_\xi}
&\le C\Big(\|u_{\mathrm{app}}(s)\|_{L^\infty_x}^2\|\partial_\xi\wh g(s)\|_{L^2_\xi}+\|u_{\mathrm{app}}(s)\|_{L^\infty_x}\|w(s)\|_{L^\infty_x}\|\partial_\xi v(s)\|_{L^2_\xi}\Big)\\
&\le C(\varepsilon_0^2M s^{-1-\alpha} \log s + \varepsilon_0 M^2 s^{-1 - 2\alpha}\log s).
\end{align}
%
Taking $\partial_\xi$ of $\wh{\Phi_{\mathrm{nl}}(g)}(t)$, one has
$$
\|\partial_\xi\wh{\Phi_{\mathrm{nl}}(g)}(t)\|_{L^2_\xi}\le C\int_t^\infty \|\partial_\xi\reallywidehat{U(-s)\mathcal{N}(u,u_{\mathrm{app}})}(s)\|_{L^2_\xi}\,ds.
$$
Using \eqref{eq:dxi-nl-diff},
after multiplying by $t^\alpha (1+\log t)^{-1}$ and taking $\sup_{t\ge T}$, the time integrals are handled exactly as in the $\Linfty_\xi$ Lipschitz argument.
This yields the bound
\begin{equation}\label{eq:Phi-nl-dxi}
\sup_{t\ge T}\Big(t^\alpha (1+\log t)^{-1}\Big)\|\partial_\xi\wh{\Phi_{\mathrm{nl}}(g)}(t)\|_{L^2_\xi}
\le C\Big(\varepsilon_0^2 M+\varepsilon_0M^2T^{-\alpha}+M^3T^{-2\alpha}\Big).
\end{equation}

\subsection{Lipschitz bound in $\partial_\xi\Ltwo_\xi$}
Let $g_1,g_2\in B_M$, $u_j=u_{\mathrm{app}}+U(\cdot)g_j=u_{\mathrm{app}}+w_j$, and $f_j=\varphi+g_j$.
We estimate
$$
\|\partial_\xi\wh{\Phi_{\mathrm{nl}}(g_1)}(t)-\partial_\xi\wh{\Phi_{\mathrm{nl}}(g_2)}(t)\|_{L^2_\xi}
\le C\int_t^\infty \|\partial_\xi\reallywidehat{U(-s)\big(F(u_1)-F(u_2)\big)}\|_{L^2_\xi}\,ds,
$$
where $F(z)=|z|^2z$. Using the expansion \eqref{eq:Nexpand} applied to $w_j$ and  taking  the difference, the analysis follows by the same argument as \eqref{eq:dxi-nl-diff} with the following bounds:
\begin{equation}\label{eq:diffbounds}
\|\partial_\xi(\wh g_1-\wh g_2)(s)\|_{L^2_\xi}\le s^{-\alpha}(\log s)\|g_1-g_2\|_{X_T},\,\,\|w_1(s)-w_2(s)\|_{L^\infty_x}\le Cs^{-1/2-\alpha}\|g_1-g_2\|_{X_T}.  
\end{equation}
We conclude
\begin{equation}\label{eq:Lip-dxi-final}
\sup_{t\ge T}t^\alpha(1+\log t)^{-1}\|\partial_\xi\wh{\Phi_{\mathrm{nl}}(g_1)}(t)-\partial_\xi\wh{\Phi_{\mathrm{nl}}(g_2)}(t)\|_{L^2_\xi}
\le C\Big(\varepsilon_0^2+\varepsilon_0 MT^{-\alpha}+M^2T^{-2\alpha}\Big)\|g_1-g_2\|_{X_T}.
\end{equation}
\subsection{Estimate of $\|\wh{\Phi_{\mathrm{nl}}(g)}(t)\|_{L^2_\xi}$}
Using Lemma \ref{lem:gL2} and the expansion \eqref{eq:Nexpand}, we directly have
\begin{align}
\left\Vert\wh{\Phi_{\mathrm{nl}}(g)}(t,\xi)\right\Vert_{L^2_\xi}&\leq\int_t^\infty \left\Vert\mathcal{N}(u,u_{\mathrm{app}})(s,x)\right\Vert_{L^2_x}\,ds\\
&\leq C (\varepsilon_0^2M t^{-\alpha} + \varepsilon_0M^2 t^{-2\alpha} + M^3 t^{-3\alpha}). 
\end{align}
Therefore, 
\begin{equation}\label{eq:Phi-nl-2}
\sup_{t\ge T}t^\alpha\|\wh{\Phi_{\mathrm{nl}}(g)}(t)\|_{L_\xi^2}
\le C\Big(\varepsilon_0^2 M+\varepsilon_0 M^2 T^{-\alpha}+M^3T^{-2\alpha}\Big).
\end{equation}
\subsection{Lipschitz bound in $L^2_\xi$} Again, this is a direct consequence of the expansion \eqref{eq:Nexpand} and the decay estimate from \eqref{eq:diffbounds}. We directly conclude that
\begin{equation}\label{eq:Lip-2-final}
\sup_{t\ge T}t^\alpha\|\wh{\Phi_{\mathrm{nl}}(g_1)}(t)-\wh{\Phi_{\mathrm{nl}}(g_2)}(t)\|_{L_\xi^2}
\le C\Big(\varepsilon_0^2+\varepsilon_0MT^{-\alpha}+M^2T^{-2\alpha}\Big)\|g_1-g_2\|_{X_T}.
\end{equation}
\subsection{Conclusion: contraction and proof of Theorem \ref{thm:main}}
Combine the $\Linfty_\xi$ Lipschitz estimate \eqref{eq:Lip-Linf-final}, the $\partial_\xi\Ltwo_\xi$ Lipschitz estimate \eqref{eq:Lip-dxi-final}, and the Lipschitz estimate in $L^2_\xi$ \eqref{eq:Lip-2-final}  to obtain
\begin{equation}\label{eq:PhiLip}
\|\Phi_{\mathrm{nl}}(g_1)-\Phi_{\mathrm{nl}}(g_2)\|_{X_T}
\le C\Big(\varepsilon_0^2+\varepsilon_0MT^{-\alpha}+M^2T^{-2\alpha}\Big)\|g_1-g_2\|_{X_T}.
\end{equation}
Next, $\Phi_\varepsilon$ satisfies \eqref{eq:Phi-eps}, hence is $T$-small since $\alpha<\delta$.
Finally, the self-mapping bounds \eqref{eq:Phi-nl-Linf},  \eqref{eq:Phi-nl-dxi}, and \eqref{eq:Phi-nl-2} show that $\Phi$ maps $B_M$ to itself for $M\sim \varepsilon_0$ and $T$ large.

Choose $\varepsilon_0$ so small that $C\varepsilon_0^2\le 1/8$.
Then choose $T$ large so that
$$
C\Big(\varepsilon_0^2+\varepsilon_0MT^{-\alpha}+M^2T^{-2\alpha}\Big)\le \frac{1}{2}.
$$
With these choices, \eqref{eq:PhiLip} implies $\Phi$ is a contraction on $B_M$ with contraction constant $\le 1/2$,
and $\Phi(B_M)\subset B_M$. By Banach's fixed point theorem, there exists a unique $g\in B_M$ with $g=\Phi(g)$.
Then $u=u_{\mathrm{app}}+U(t)g$ solves NLS on $[T,\infty)$ and satisfies \eqref{eq:mainbound}.
This proves \eqref{eq:mainbound}.

{\bf Proof of \eqref{eq:u-asymp-main}.}
We start from the identity
$$
u(t)=U(t)f(t),
\qquad
\wh f(t)=v(t)+\wh g(t).
$$
Applying \eqref{eq:SP-template} to $h=f(t)$, we get
\begin{equation}\label{eq:Utf-main}
u(t,x)=U(t)f(t,x)
=(2\pi)^{-1/2}(\ii t)^{-1/2}e^{\ii x^2/(2t)}\,\wh f\!\Big(t,\frac{x}{t}\Big)+\mathrm{Rem}_1(t,x),
\end{equation}
where
\begin{equation}\label{eq:Rem1}
\|\mathrm{Rem}_1(t)\|_{L^\infty_x}\le C\,t^{-3/4}\|\partial_\xi \wh f(t)\|_{L^2_\xi}.
\end{equation}
Since $\partial_\xi \wh f=\partial_\xi v+\partial_\xi \wh g$, we have
$$
\|\partial_\xi \wh f(t)\|_{L^2_\xi}\le \|\partial_\xi v(t)\|_{L^2_\xi}+\|\partial_\xi \wh g(t)\|_{L^2_\xi}.
$$
From the explicit formula $v(t,\xi)=W(\xi)e^{-\ii\la|W(\xi)|^2\log t}$ and the smallness assumption
$\|W\|_{H^2_\xi}\le \varepsilon_0$, it follows by the product and chain rules that
$$
\|\partial_\xi v(t)\|_{L^2_\xi}\le C\,\varepsilon_0(1+\log t),
$$
while \eqref{eq:mainbound} gives $\|\partial_\xi \wh g(t)\|_{L^2_\xi}\le Ct^{-\alpha}(1 + \log t)$.
Hence \eqref{eq:Rem1} yields
\begin{equation}\label{eq:Rem1-final}
\|\mathrm{Rem}_1(t)\|_{L^\infty_x}\le C\,t^{-3/4}\varepsilon_0(1+\log t)+C\,t^{-3/4-\alpha}(1 + \log t).
\end{equation}
Write
$$
\wh f\Big(t,\frac{x}{t}\Big)=v\!\Big(t,\frac{x}{t}\Big)+\wh g\!\Big(t,\frac{x}{t}\Big),
$$
and define
$$
\mathrm{Rem}_2(t,x)=(2\pi)^{-1/2}(\ii t)^{-1/2}e^{\ii x^2/(2t)}\,\wh g\!\Big(t,\frac{x}{t}\Big).
$$
Then
$$
\|\mathrm{Rem}_2(t)\|_{L^\infty_x}\le (2\pi)^{-1/2}t^{-1/2}\|\wh g(t)\|_{L^\infty_\xi}\le C\,t^{-1/2-\alpha}
$$
by \eqref{eq:mainbound}. Combining \eqref{eq:Utf-main} with the explicit expression
\begin{equation}\label{eq:v-on-ray}
v\!\Big(t,\frac{x}{t}\Big)=W\!\Big(\frac{x}{t}\Big)\exp\!\Big(-\ii\la\Big|W\!\Big(\frac{x}{t}\Big)\Big|^2\log t\Big),
\end{equation}
we obtain \eqref{eq:u-asymp-main} with
$
\mathrm{Err}(t,x)=\mathrm{Rem}_1(t,x)+\mathrm{Rem}_2(t,x).
$
The bound \eqref{eq:u-asymp-err} follows from \eqref{eq:Rem1-final} and the estimate on $\mathrm{Rem}_2$.



\begin{thebibliography}{99}
	\bibitem{CW}
		Cazenave, T. and Weissler, B. 
		The Cauchy problem
		for the critical nonlinear Schr\"odinger equation in $H^{s}$. \emph{Nonlinear
			Anal}. 14 (1990), no. 10, 807\textendash 836.
\bibitem{CP}
Chen, G. and Pusateri, F., . The 1-dimensional nonlinear Schrödinger equation with a weighted $L^1$ potential. Analysis \& PDE, 15, 2022 , no. 4.  937--982.
		\bibitem{DZ}
		Deift, P. and Zhou, X. 
		Long-time asymptotics for
		solutions of the NLS equation with initial data in a weighted Sobolev
		space. Dedicated to the memory of Jurgen K. Moser.
		\emph{Comm. Pure Appl. Math.} 56 (2003), no. 8, 1029\textendash 1077.
		
		

\bibitem{GPR} 
Germain, P., Pusateri, F. and Rousset, F. 
The nonlinear Schr\"odinger equation with a potential.
\emph{Ann. Inst. H. Poincaré Anal. Non Linéaire} 35 (2018), no. 6, 1477\textendash 1530. 

\bibitem{HN} 
		Hayashi, N. and Naumkin, P.I.
		Asymptotics for large time of solutions to the nonlinear Schr\"odinger and Hartree equations.
		\emph{Amer. J. Math.} 120 (1998), 369\textendash 389.


\bibitem{HN2} Hayashi, N. and Naumkin, P.I. Domain and range of the modified wave operator for Schrödinger equations with a critical nonlinearity. \emph{Communications in mathematical physics} 267, no. 2 (2006): 477-492.        
		
		\bibitem{IT} 
		Ifrim, M. and Tataru, D. 
		Global bounds for the cubic nonlinear Schr\"odinger equation (NLS) in one space dimension. 
		\emph{Nonlinearity} 28 (2015), no. 8, 2661--2675.
	
		\bibitem{KP}
		Kato, J. and Pusateri, F.
		A new proof of long-range
		scattering for critical nonlinear Schr\"odinger equations. \emph{Differential
			Integral Equations}, 24 (2011), 923\textendash 940.

    \bibitem{KM} Kawamoto M, Mizutani H. Modified wave operators for the defocusing cubic nonlinear Schr\" odinger equation in one space dimension with large scattering data. arXiv preprint arXiv:2506.01871. 2025 Jun 2.        
 \bibitem{LS} 
		Lindblad, H. and Soffer, A.
		Scattering and small data completeness for the critical nonlinear Schr\"odinger equation. \emph{Nonlinearity} 19 (2006), no. 2, 345–353.           
\bibitem{MurphyReview}
Murphy, J.
A review of modified scattering for the 1D cubic NLS,
\emph{RIMS K\^oky\^uroku Bessatsu} (2021).	
	\bibitem{O} Ozawa, T. Long range scattering for nonlinear Schr\"odinger equations in one space dimension. \emph{Comm.
	Math. Phys.} 139 (1991), no. 3, 479--493.
  \bibitem{Se} Segata, J-I,, Final state problem for the cubic nonlinear Schrödinger equation with repulsive delta potential. \emph{Comm. Partial Differential Equations} 40 (2015), no. 2, 309–328..          

\end{thebibliography}
\end{document}